\def\cl{\centerline}
\def\G{\mathcal{G}}
\def\vs{\vspace*}
\def\S{{\mathfrak{S}}}
\def\V{\mathcal{V}}
\def\Z{\mathbb{Z}}
\def\N{\mathbb{N}}
\def\H{\mathcal{H}}
\def\G{\mathcal{G}}
\def\C{\mathbb{C}}
\def\ni{\noindent}
\numberwithin{equation}{section}
\newtheorem{theo}{Theorem}[section]
\newtheorem{defi}[theo]{Definition}
\newtheorem{lemm}[theo]{Lemma}
\newtheorem{prop}[theo]{Proposition}
\newtheorem{rema}[theo]{Remark}
\begin{document}
\begin{center}
\cl{\large\bf \vs{8pt}The  Heisenberg-Virasoro Lie conformal superalgebra}
\cl{ Haibo Chen$^*$, Xiansheng Dai and Yanyong Hong}
\end{center}
\footnote {
*Corresponding author: H. Chen (Hypo1025@163.com).
}
{\small
\parskip .005 truein
\baselineskip 3pt \lineskip 3pt

\noindent{{\bf Abstract:}
In this  paper, we  introduce a finite Lie conformal  superalgebra called the Heisenberg-Virasoro Lie conformal  superalgebra  $\mathfrak{s}$ by using a class of    Heisenberg-Virasoro Lie conformal
 modules.  The super Heisenberg-Virasoro algebra  of  Ramond type $\S$ is    defined by  the  formal distribution Lie superalgebra  of  $\mathfrak{s}$.
 Then we    construct  a class of simple  $\S$-modules, which are induced  from    simple modules of some    finite dimensional solvable Lie superalgebras.
 These modules are isomorphic to     simple restricted  $\S$-modules, and include the highest weight modules, Whittaker modules and high order Whittaker  modules.
As a byproduct, we  present a   subalgebra  of $\S$, which is isomorphic to the super Heisenberg-Virasoro algebra  of  Neveu-Schwarz type.
\vs{5pt}

\ni{\bf Key words:}
  Lie conformal superalgebra,  super Heisenberg-Virasoro algebra,  simple module.}

\ni{\it Mathematics Subject Classification (2020):} 17B10, 17B65, 17B68.}
\parskip .001 truein\baselineskip 6pt \lineskip 6pt
\tableofcontents
\section{Introduction}
Throughout the present paper,  we denote by $\C,\C^*,\Z,\Z_+$ and $\N$ the sets of complex numbers, nonzero complex numbers,  integers,
 nonnegative integers and positive integers, respectively.
   All vector superspaces
(resp. superalgebras, supermodules)   and
 spaces (resp. algebras, modules)    are considered to be over
$\C$.

The  concept of  Lie conformal  (super)algebras was  introduced  by Kac,
which encodes an axiomatic description of the
operator product expansion (or rather its Fourier transform) of chiral fields in conformal field theory (see \cite{K1,K3}).
The theory of Lie conformal  (super)algebras gives us a powerful tool
for the study of infinite dimensional Lie (super)algebras satisfying
the locality property   (see \cite{K2}).    Lie (super)algebras and Lie conformal (super)algebras have been studied extensively by researchers in recent years
 (see,  e.g. \cite{CHSX,CHS0,X,SXY}).

As an important infinite dimensional Lie  algebra, \textit{the twisted Heisenberg-Virasoro algebra} $\H$    is   the universal central extension of
the Lie algebra $$\overline {\mathcal{H}}:=\left\{f(t)\frac{d}{dt}+g(t)\,\Big|\, f(t),g(t)\in\C[t,t^{-1}]\right\}$$ of differential operators of order at
most one,  which was studied   in  \cite{ADK}.
They established a connection  between the second cohomology
of certain moduli spaces of curves and the second cohomology of the Lie algebra of differential
operators of order at most one.

Highest weight modules and  Whittaker modules are  two classes of important modules in representation theory. In \cite{ADK},  the authors precisely determined the determinant formula of the Shapovalov form for the Verma modules, and   showed   any simple
highest weight module for $\H$ is isomorphic to the tensor product of a simple module for the Virasoro
algebra and a simple module for the infinite dimensional Heisenberg algebra. In  \cite{B}, Billig obtained the character formula for simple highest weight
modules under some conditions of trivial actions.     The simple weight modules with
finite dimensional weight spaces (namely, Harish-Chandra modules) over $\H$ were completely classified in \cite{LZ1}, which turn out to be modules of intermediate series or
highest (lowest) weight modules. Subsequently, a simpler and more conceptual proof of
the classification of irreducible Harish-Chandra modules  over $\H$  were   presented  in \cite{LG}.

  Whittaker modules were first   introduced  for $\mathfrak{sl}(2)$ by Arnal and Pinczon    in \cite{AP}.
 Meanwhile,
 Whittaker modules of finite dimensional complex semisimple Lie algebras were
defined in \cite{K}. From then on, they have been studied on different kinds of subjects,
especially for affine Lie algebra, infinite dimensional Lie algebra and Lie superalgebra  (see,  e.g. \cite{BM,CJ,ALZ,C,BCW,LPX0,LPX,LWZ,CG,LZ3}).
Furthermore,    Whittaker modules have also been  investigated in the framework of vertex operator algebra  theory (see \cite{ALZ,T,HY,ALPY}).

Recently, the representation theory of  super Virasoro algebra was widely researched (see e.g.  \cite{LPX,YYX1,CLL}),
which inspires us to study the super case  of twisted Heisenberg-Virasoro algebra.
Now we  present the definition of   \textit{super Heisenberg-Virasoro algebra  of
Ramond type} $\S$. It is an  infinite dimensional  Lie superalgebra
$$\mathfrak{S}=\bigoplus_{m\in\Z}\C L_m \oplus \bigoplus_{m\in\Z}\C G_{m} \oplus\bigoplus_{m\in\Z}\C I_m,$$
which  satisfies  the following  super-brackets
\begin{eqnarray*} \label{def1.1}
&&[L_{m},L_{n}]=(n-m)L_{m+n},
\
  [L_{m},G_{n}]=nG_{m+n},
   \\&& [L_{m},I_{n}]=nI_{m+n},\
 [G_{m},G_{n}]=2I_{m+n},
 \
 [I_{m},I_{n}]=[I_{m},G_{n}]=0
\end{eqnarray*} for $m,n\in\Z$.
By its definition, we have the following decomposition:
$$\S=\S_{\bar 0}\oplus\S_{\bar 1},$$
where $\S_{\bar 0}=\mathrm{span}_{\C}\{L_m,I_m\mid m\in\Z\}$, $\S_{\bar 1}=\mathrm{span}_{\C}\{G_m\mid m\in\Z\}$.
Notice that the even part  $\S_{\bar 0}$   is isomorphic to
    twisted Heisenberg-Virasoro algebra with some   trivial   center elements.
The super Heisenberg-Virasoro algebra  of
Ramond type has a $\Z$-grading by the eigenvalues of the adjoint action of $L_0$. Then $\S$ possesses the following triangular decomposition:
$$\S=\S_+\oplus\S_0\oplus\S_-,$$
where $\S_{+}=\mathrm{span}_{\C}\{L_m,I_m,G_m\mid m\in\N\}$, $\S_{-}=\mathrm{span}_{\C}\{L_m,I_m,G_m\mid -m\in\N\}$, $\S_{0}=\mathrm{span}_{\C}\{L_0,I_0,G_0\}$. Note that $\C I_0$ is the center of $\S$.

The actions of elements in the positive part of the algebra are locally finite, which is the same property of highest weight modules and Whittaker modules (see, e.g. \cite{MZ1,OW}).
 Mazorchuk and Zhao in \cite{MZ} proposed   a very general construction of simple Virasoro modules which  generalizes
and includes both highest weight modules and   various versions of Whittaker modules.
This construction
enabled them to classify all simple Virasoro modules that are locally finite over a positive
part.
Inspired by this, new simple modules over the Virasoro algebra and its some (super)extended cases have been investigated (see \cite{CG,G,CHS,LPX,CGHS,LZ2,MW}).
The high order Whittaker modules  were studied in \cite{LGZ} by generalized some known results on Whittaker modules of the Virasoro algebra.  They  obtained concrete bases for all irreducible Whittaker modules (instead of a quotient of modules). Observe  that     highest weight modules,   Whittaker modules and   high order
 Whittaker modules over $\S$  are restricted  modules. This makes us  to study restricted    modules for the the super Heisenberg-Virasoro algebra of Ramond type.

The rest of this paper is organized as follows.
In Section $2$, we   recall some notations and definitions of Lie  conformal superalgebra and Lie superalgebra.
In Section $3$, a   super  Heisenberg-Virasoro algebra  is introduced from the view of the formal distribution Lie superalgebra  of the Heisenberg-Virasoro Lie conformal superalgebra.
In Section $4$, a class of  simple  restricted $\S$-modules  are  constructed, which  generalize   and include  the highest weight modules, Whittaker modules and high order Whittaker modules.
In Section $5$, we give  a characterization of simple restricted modules for the super Heisenberg-Virasoro algebra   of  Ramond type. More precisely, it reduces the problem of classification of simple restricted $\S$-modules to classification of simple modules over some finite dimensional solvable Lie superalgebras.
At last, we show some examples of restricted $\S$-modules, such as the highest weight modules, Whittaker
modules and high order Whittaker modules  and so on.

\section{Preliminaries}
In this section, we will   make some  preparations for   later use.
\subsection{Lie conformal superalgebra}
Some definitions and results related to Lie conformal superalgebras and conformal
modules    are recalled (see \cite{DK,K1,K3}).

A Lie conformal superalgebra is called \textit{finite} if it is finitely generated as a $\C[\partial]$-module, or else it is called \textit{infinite}.
\begin{defi}\rm \label{D1}
{\rm (1)} \textit{A   Lie  conformal  superalgebra}  $S=S_{\bar0}\oplus S_{\bar1}$ is a $\Z_2$-graded $\C[\partial]$-module  endowed with a
  $\lambda$-bracket $[a{}\, _\lambda \, b]$
which defines a
linear map $S_{\alpha}\otimes S_{\beta}\rightarrow \C[\lambda]\otimes S_{\alpha+\beta}$, where $\alpha,\beta\in\Z_2$ and $\lambda$ is an indeterminate, and satisfies the following axioms:
\begin{equation*}
\aligned
&[\partial a\,{}_\lambda \,b]=-\lambda[a\,{}_\lambda\, b],\
[a\,{}_\lambda \,\partial b]=(\partial+\lambda)[a\,{}_\lambda\, b],\\
&[a\, {}_\lambda\, b]=-(-1)^{|a||b|}[b\,{}_{-\lambda-\partial}\,a],\\
&[a\,{}_\lambda\,[b\,{}_\mu\, c]]=[[a\,{}_\lambda\, b]\,{}_{\lambda+\mu}\, c]+(-1)^{|a||b|}[b\,{}_\mu\,[a\,{}_\lambda \,c]]
\endaligned
\end{equation*}
for $a\in S_\alpha,b\in S_\beta,c\in S_\gamma$ and $\alpha,\beta,\gamma\in\Z_2$.

{\rm (2)}
\textit{A  conformal module} $M=M_{\bar0}\oplus M_{\bar1}$ over a Lie conformal
superalgebra $S$ is a  $\Z_2$-graded $\C[\partial]$-module endowed with a $\lambda$-action $S_\alpha\otimes M_\beta\rightarrow \C[\lambda]\otimes M_{\alpha+\beta}$ such that
\begin{eqnarray*}
&&(\partial a)\,{}_\lambda\, v=-\lambda a\,{}_\lambda\, v,\ a{}\,{}_\lambda\, (\partial v)=(\partial+\lambda)a\,{}_\lambda\, v,
\\&&
a\,{}_\lambda\, (b{}\,_\mu\, v)-(-1)^{|a||b|}b\,{}_\mu\,(a\,{}_\lambda\, v)=[a\,{}_\lambda\, b]\,{}_{\lambda+\mu}\, v
\end{eqnarray*}
for all $a\in S_\alpha,b\in S_\beta,v\in M_\gamma$ and $\alpha,\beta,\gamma\in\Z_2$.
\end{defi}

In particular, there is an important Lie superalgebra associated with  the Lie conformal superalgebra.
Let $S$ be a Lie conformal superalgebra.
 Assume that  Lie$(S)$ is the quotient
of the vector space with basis $a_n$ $(a\in S, n\in\mathbb{Z})$ by
the subspace spanned over $\mathbb{C}$ by
elements:
$$(\alpha a)_n-\alpha a_n,~~(a+b)_n-a_n-b_n,~~(\partial
a)_n+na_{n-1},~~~\text{where}~~a, b\in S, \alpha\in \mathbb{C}, n\in
\mathbb{Z}.$$ Note that if $a\in S_{\alpha}$ then $a_n\in \mathrm{Lie}(S)_{\alpha}$ for $n\in\Z,\alpha\in\Z_2$. The operation on Lie$(S)$ is given as follows:
\begin{equation}\label{2011}
[a_m, b_n]=\sum_{j\in \mathbb{N}}\left(\begin{array}{ccc}
m\\j\end{array}\right)(a_{(j)}b)_{m+n-j},\end{equation}
 where $a_{(j)}b$ is called the \textit{$j$-th product}, given by  $[a_\lambda b]=\sum_{n=0}^\infty\frac{\lambda^{n}}{n!}(a_{(n)}b)$.
 Then
Lie$(S)$ is a Lie superalgebra and it is called  \textit{a formal distribution Lie superalgebra} of $S$ (see \cite{K3}).

\subsection{Lie   superalgebra}

  Let $V=V_{\bar0}\oplus V_{\bar1}$ be a $\Z_2$-graded vector space. Then any element $v\in V_{\bar0}$
is said to be \textit{even}  and any element $v\in V_{\bar1}$ is said to be \textit{odd}. Define $|v|=0$ if
$v$ is even and $|v|=1$ if $v$ is odd. Elements in  $V_{\bar0}$ or  $V_{\bar1}$ are called \textit{homogeneous}.
 Throughout the present paper, all elements in superalgebras and modules are homogenous unless
specified.

Assume that $\mathcal{G}$ is a Lie superalgebra. A \textit{$\mathcal{G}$-module} is a $\Z_2$-graded vector space $V$ together
with a bilinear map $\mathcal{G}\times V\rightarrow V$, denoted $(x,v)\mapsto xv$ such that
$$x(yv)-(-1)^{|x||y|}y(xv)=[x,y]v$$
and
$$\mathcal{G}_{\bar i} V_{\bar j}\subseteq  V_{\bar i+\bar j}$$
for all $x, y \in \mathcal{G}, v \in V $. Thus there is a parity-change functor $\Pi$ on the category of
$\mathcal{G}$-modules to itself. In other words, for any module
$V=V_{\bar0}\oplus V_{\bar1}$, we have a new module $\Pi(V)$ with the same underlining space with the
parity exchanged, i.e., $\Pi(V_{\bar 0})=V_{\bar 1}$ and $\Pi(V_{\bar 1})=V_{\bar0 }$. We use $U(\mathcal{G})$ to denote
the universal enveloping algebra.
All  modules   considered in this paper are $\Z_2$-graded and all  irreducible modules are
  non-trivial.

\begin{defi}\rm
Let $V$ be a module of a Lie superalgebra $\G$ and $x\in\G$.

{\rm (1)} If for any $v\in V$ there exists $n\in\Z_+$ such that $x^nv=0$,  then we call that the action of  $x$  on $V$ is  \textit{ locally nilpotent}.
Similarly, the action of $\G$   on $V$  is  \textit{ locally nilpotent} if for any $v\in V$ there exists $n\in\Z_+$ such that $\G^nv=0$.

{\rm (2)} If for any $v\in V$ we have $\mathrm{dim}(\sum_{n\in\Z_+}\C x^nv)<+\infty$, then we call that the action of $x$ on $V$ is   \textit{ locally finite}.
Similarly, the action of $\G$  on $V$  is   \textit{ locally finite}
 if for any $v\in V$ we have $\mathrm{dim}(\sum_{n\in\Z_+} {\G}^nv)<+\infty$.
\end{defi}
We note that   the action of $x$ on $V$ is locally nilpotent implies that the action of $x$ on $V$ is locally finite.
If $\G$ is a finitely generated Lie superalgebra,  the action of $\G$ on $V$ is locally nilpotent implies that the action of
 $\G$
 on $V$ is locally finite.

\section{The super Heisenberg-Virasoro algebra}
\subsection{The Lie superalgebra  $\mathfrak{S}$}
 \textit{The Heisenberg-Virasoro  Lie conformal algebra} $\mathfrak{v}$
is a free $\C[\partial]$-module generated by $L$ and  $I$   satisfying
 $$[L\, {}_\lambda \, L]=(\partial+2\lambda)L,\
 [L\, {}_\lambda \, I]=(\partial+\lambda)I,\ [I\, {}_\lambda \, I]=0.$$
  It is
well known  that
 all non-trivial free  $\mathfrak{v}$-modules   of rank one over $\C[\partial]$ are defined as follows (see \cite{XY}):
$$V(a,b,c) = \C[\partial]v,\ L\,{}_\lambda\, v=(\partial + a\lambda+b)v,\ I\,{}_\lambda\, v=cv,$$
where $a,b,c\in\C$.
The module $V(a,b,c)$ is irreducible if and only if $(a,c)\neq(0,0)$.
 It follows from this that we consider a $\Z_2$-graded $\C[\partial]$-module
$$\mathcal{S}(\phi,\varphi,a,b,c)=\mathcal{S}_{\bar 0}\oplus\mathcal{S}_{\bar 1}$$
with
$\mathcal{S}_{\bar 0}=\C[\partial]L\oplus\C[\partial]I,
\mathcal{S}_{\bar 1}=\C[\partial]G$ and satisfying
\begin{eqnarray}\label{3.11}
&& [L\, {}_\lambda \, G]=(\partial + a\lambda+b) G,
\  [I\, {}_\lambda \, G]=cG,
\ [G\, {}_\lambda \, G]=\phi(\partial,\lambda)L+\varphi(\partial,\lambda)I,
\end{eqnarray}
where  $\phi(\partial,\lambda),\varphi(\partial,\lambda)\in\C[\partial,\lambda]$ without $\phi(\partial,\lambda)=\varphi(\partial,\lambda)=0$.
\begin{lemm}\label{le511}
Let   $a,b,c\in\C$.   Then the  $\Z_2$-graded $\C[\partial]$-module  $\mathcal{S}(\phi,\varphi,a,b,c)$ becomes   a Lie conformal superalgebra
if and only if  $a=1$, $\phi(\partial,\lambda)=b=c=0$, $\varphi(\partial,\lambda)=\Delta\in\C^*$.
\end{lemm}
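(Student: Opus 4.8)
The plan is to prove the two implications separately, organizing the forward direction around the Lie conformal superalgebra axioms and first isolating which of them are automatic. I would begin by observing that the even part $\mathcal{S}_{\bar0}=\C[\partial]L\oplus\C[\partial]I$ carries exactly the $\lambda$-brackets of the Heisenberg-Virasoro Lie conformal algebra $\mathfrak{v}$, so every axiom involving only $L,I$ holds automatically; moreover the prescribed actions $[L\,{}_\lambda\,G]=(\partial+a\lambda+b)G$ and $[I\,{}_\lambda\,G]=cG$ are precisely the module structure $V(a,b,c)$ of \cite{XY}, so the Jacobi identities for all triples containing at most one $G$, namely $(L,L,G)$, $(L,I,G)$ and $(I,I,G)$, hold for \emph{every} $(a,b,c)$ because they coincide with the module axioms. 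Hence the only genuine constraints arise from the skew-symmetry of $[G\,{}_\lambda\,G]$ and from the Jacobi identities for $(L,G,G)$, $(I,G,G)$ and $(G,G,G)$.

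Next I would extract these constraints as polynomial identities in $\partial,\lambda,\mu$. Since $G$ is odd, skew-symmetry reads $[G\,{}_\lambda\,G]=[G\,{}_{-\lambda-\partial}\,G]$, which forces both $\phi$ and $\varphi$ to be invariant under $\lambda\mapsto-\lambda-\partial$. Expanding the $(L,G,G)$ Jacobi identity via the sesquilinearity rules $[a\,{}_\lambda\,\partial b]=(\partial+\lambda)[a\,{}_\lambda\,b]$ and $[\partial a\,{}_\lambda\,b]=-\lambda[a\,{}_\lambda\,b]$ and comparing the coefficients of $L$ and of $I$ produces an $L$-component identity and an $I$-component identity for $\phi$ and $\varphi$. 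Setting $\lambda=0$ in either collapses it to $2b\phi=0$ and $2b\varphi=0$; as $(\phi,\varphi)\neq(0,0)$, this forces $b=0$. Likewise the $(I,G,G)$ Jacobi identity, using $[I\,{}_\lambda\,L]=\lambda I$ and $[I\,{}_\lambda\,I]=0$, yields $c(\phi(\partial,\lambda+\mu)+\phi(\partial,\mu))=0$ together with $\lambda\phi(\partial+\lambda,\mu)=c(\varphi(\partial,\lambda+\mu)+\varphi(\partial,\mu))$. Putting $\lambda=0$ in the first gives $c\phi=0$; if $\phi\neq0$ then $c=0$, whereupon the second degenerates to $\lambda\phi(\partial+\lambda,\mu)=0$, i.e. $\phi=0$, a contradiction. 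Thus $\phi=0$, and feeding this into the second identity (again at $\lambda=0$) gives $2c\varphi=0$, hence $c=0$ since now $\varphi\neq0$. I note that once $\phi=0$ and $c=0$ the $(G,G,G)$ identity holds trivially, so it imposes no further condition.

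It then remains to pin down $a$ and $\varphi$ from the $I$-component identity with $b=c=0$, and this is the step I expect to be the main obstacle, since it is a genuine functional equation rather than a one-line specialization. The key trick I would use is to substitute $\partial=-\lambda$, which annihilates the left-hand side, and to combine the resulting relation with the skew-symmetry invariance. Concretely, at $\partial=-\lambda,\ \mu=0$ one obtains $(a-1)\lambda\bigl(\varphi(-\lambda,\lambda)+\varphi(-\lambda,0)\bigr)=0$, and since invariance gives $\varphi(-\lambda,\lambda)=\varphi(-\lambda,0)$, this forces $\varphi(\partial,0)\equiv0$ whenever $a\neq1$; plugging $\varphi(\cdot,0)=0$ into the $\mu=0$ specialization of the identity then yields $(a-1)\lambda\varphi(\partial,\lambda)=0$, i.e. $\varphi=0$, a contradiction. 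Therefore $a=1$. With $a=1$ the $\partial=-\lambda$ specialization reduces to $\mu\bigl(\varphi(-\lambda,\mu)-\varphi(-\lambda,\mu+\lambda)\bigr)=0$, so $\varphi(-\lambda,\cdot)$ is invariant under the shift $\mu\mapsto\mu+\lambda$ for all $\lambda$; a polynomial with this property must be independent of its second argument, and the residual first-argument dependence is killed by the $\mu=0$ equation, leaving $\varphi=\Delta$ a constant, necessarily nonzero because $(\phi,\varphi)\neq(0,0)$.

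Finally, for the converse I would simply substitute $a=1,\ b=c=0,\ \phi=0,\ \varphi=\Delta\in\C^*$ into all the axioms: the $\mathfrak{v}$-relations and the module relations hold as above, $[G\,{}_\lambda\,G]=\Delta I$ is manifestly invariant under $\lambda\mapsto-\lambda-\partial$, and the $(L,G,G)$, $(I,G,G)$ and $(G,G,G)$ identities reduce to trivialities, every term carrying a factor $\phi$ or $c$ or matching directly as $\Delta(\partial+\lambda)=(\partial+\lambda)\Delta$. This confirms that $\mathcal{S}(0,\Delta,1,0,0)$ is a Lie conformal superalgebra and completes the equivalence.
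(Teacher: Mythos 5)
Your proof is correct, and for the crux of the lemma it takes a genuinely different route from the paper's. The preliminary reductions agree up to reordering: like the paper, you set $\lambda=0$ in the $(I,G,G)$ Jacobi identity to kill $\phi$ and $c$, and $\lambda=0$ in the $(L,G,G)$ identity to kill $b$ (the paper does $(I,G,G)$ first and then $b$, you do $b$ first; both work). The divergence is in how $a=1$ and $\varphi=\Delta$ are forced. The paper sets $\partial=0$ in the $I$-component identity, takes the limit $\lambda\to 0$ to obtain the differential equation $\mu\frac{d}{d\mu}\varphi(0,\mu)=2(a-1)\varphi(0,\mu)$, solves it as $\varphi(0,\mu)=\Delta\mu^{2(a-1)}$ with $2(a-1)\in\Z_+$, reconstructs $\varphi(\lambda,\mu)$ explicitly, and then substitutes into the $\partial=-\lambda$ specialization and compares highest degrees in $\lambda$ to conclude $a=1$; notably it never invokes skew-symmetry in the necessity direction. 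You instead bring in the skew-symmetry axiom $\varphi(\partial,\lambda)=\varphi(\partial,-\lambda-\partial)$, which at $\partial=-\lambda$ gives $\varphi(-\lambda,\lambda)=\varphi(-\lambda,0)$; combining this with the $\partial=-\lambda$, $\mu=0$ specialization yields $2(a-1)\lambda\,\varphi(-\lambda,0)=0$, and the contradiction argument you run from there ($\varphi(\cdot,0)\equiv 0$ forces $\varphi\equiv 0$ via the $\mu=0$ equation) pins down $a=1$ without any limit, ODE, or explicit formula for $\varphi$. Your conclusion that $\varphi$ is constant via periodicity of polynomials (a polynomial in $\mu$ invariant under the shift $\mu\mapsto\mu+\lambda_0$ for $\lambda_0\neq 0$ is constant, then Zariski density removes the $\mu$-dependence everywhere, and the $\mu=0$ equation removes the $\partial$-dependence) is likewise sound. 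What each approach buys: yours is shorter, purely algebraic, and makes visible exactly which axiom produces each constraint, at the cost of using one more axiom (skew-symmetry); the paper's argument is self-contained within the Jacobi identities and exhibits the intermediate family $\varphi(0,\mu)=\Delta\mu^{2(a-1)}$, which explains structurally why only $a=1$ survives. Your observation that the triples with at most one $G$ impose no conditions because they are precisely the conformal-module axioms for $V(a,b,c)$, and that $(G,G,G)$ is vacuous once $\phi=c=0$, is also a useful clarification that the paper leaves implicit.
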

\begin{proof}
First we prove the necessity.
   Assume that $\mathcal{S}(\phi,\varphi,a,b,c)$ is a Lie conformal superalgebra.
 Using the Jacobi identity for triple $(I, G, G)$,  we have
 \begin{eqnarray}\label{324.11}
&& c(\phi(\partial,\lambda+\mu)+\phi(\partial,\mu))=0,
\\&&\label{324.22}   \lambda\phi(\partial+\lambda,\mu)=c(\varphi(\partial,\lambda+\mu)+\varphi(\partial,\mu)).
\end{eqnarray}
Letting $\lambda=0$ in \eqref{324.11} and \eqref{324.22},  we have $2c\phi(\partial,\mu)=2c\varphi(\partial,\mu)=0$.
 If $c\neq0$,  one can see that $\phi(\partial,\mu)=\varphi(\partial,\mu)=0$.    If $c=0$, by \eqref{324.22}, one has $\phi(\partial,\lambda)=0$.
 Then the third relations of \eqref{3.11} can be rewritten as $[G\, {}_\lambda \, G]=\varphi(\partial,\lambda)I$.
By the Jacobi identity for triple $(L, G, G)$, we get
 \begin{eqnarray}
 \label{555.66}(\partial+\lambda)\varphi(\partial+\lambda,\mu)
  =(-\mu+(a-1)\lambda+b)\varphi(\partial,\lambda+\mu)
  +(\partial+\lambda a+\mu+b)\varphi(\partial,\mu).
\end{eqnarray}
If $b\neq0$, putting  $\lambda=0$ into   \eqref{555.66}, we have $\varphi(\partial,\mu)=0$. This contradicts the hypothesis.
Consider $b=0$.
 Setting  $\partial=0$ in   \eqref{555.66},
 we have
 \begin{eqnarray}\label{77.88}
 &&\mu\frac{\varphi(0,\mu+\lambda)-\varphi(0,\mu)}{\lambda}
  =a\varphi(0,\mu)+(a-1)\varphi(0,\lambda+\mu)
  -\varphi(\lambda,\mu).
\end{eqnarray}
Taking $\lambda\rightarrow0$, we have $\mu\frac{d}{d\mu}\varphi(0,\mu)=2(a-1)\varphi(0,\mu)$. If $2(a-1)\notin\Z_+$, one has  $\varphi(0,\mu)=0$.
Inserting this into \eqref{77.88}, we get  $\varphi(\partial,\lambda)=0$. Then consider $2(a-1)\in\Z_+$, we have $\varphi(0,\mu)=\Delta \mu^{2(a-1)}$ for $\Delta\in\C^*$.
 Putting this into \eqref{77.88},  we obtain
  \begin{eqnarray}\label{8888.88}
  \varphi(\lambda,\mu)= a\Delta \mu^{2(a-1)}+(a-1)\Delta {(\lambda+\mu)}^{2(a-1)}-\mu\Delta\frac{(\lambda+\mu)^{2(a-1)}-\mu^{2(a-1)}}{\lambda}
  \end{eqnarray}
Letting $\partial=-\lambda$ in \eqref{555.66}, one can see that
   \begin{eqnarray}\label{9087.88}((a-1)\lambda-\mu)\varphi(-\lambda,\lambda+\mu)
  +((a-1)\lambda+\mu)\varphi(-\lambda,\mu)=0.
  \end{eqnarray} Then  using \eqref{8888.88} in \eqref{9087.88} and comparing the highest degree of $\lambda$, we check that $a=1$.
  Hence,  $\varphi(\partial,\lambda)=\Delta\in\C^*$.

In a word, by the definition of  Lie conformal superalgebra,  we get $a=1$, $b=c=0$, $\phi(\partial,\lambda)=0$ and $\varphi(\partial,\lambda)=\Delta\in\C^*$.

Based on Definition \ref{D1}, the sufficiency is clear.
\end{proof}

 Up to isomorphism, we may assume that $\Delta=2$
  in  Lemma \ref{le511}.  We can define a   finite Lie conformal superalgebra called  \textit{the Heisenberg-Virasoro Lie conformal superalgebra } $\mathfrak{s} =\mathfrak{s}_{\bar 0}\oplus\mathfrak{s}_{\bar 1}$
with $\mathfrak{s}_{\bar 0}=\C[\partial]L\oplus\C[\partial]I,\mathfrak{s}_{\bar 1}=\C[\partial]G$, which
  satisfies the following non-trivial $\lambda$-brackets
 \begin{eqnarray*}
&&[L\, {}_\lambda \, L]=(\partial+2\lambda) L,
\ [L\, {}_\lambda \, I]=(\partial+\lambda) I,
\ [L\, {}_\lambda \, G]=(\partial+\lambda) G,
\   [G\, {}_\lambda \, G]=2I.
\end{eqnarray*}

 In the rest of this section,  an infinite dimensional  Lie superalgebra  associated with the twisted Heisenberg-Virasoro algebra  is presented.
\begin{lemm}\label{lemma3225}
A formal distribution Lie superalgebra  of $\mathfrak{s}$ is given by
$$\mathrm{Lie}(\mathfrak{s})=\Big\{L_{m},I_{m},G_{m}\mid m\in \Z\Big\}$$
with    non-vanishing    relations:

\begin{eqnarray*}
&&[L_{m},L_{n}]=(n-m)L_{m+n},\
 [L_{m},I_{n}]=nI_{m+n},
 \\&&
 [L_{m},G_{n}]=nG_{m+n},\
   [G_{m},G_{n}]=2I_{m+n},
\end{eqnarray*}
where $m,n\in\Z$.
\end{lemm}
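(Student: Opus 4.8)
The plan is to apply the general construction of Section~$2$ directly. Since Lemma~\ref{le511} shows that $\mathfrak{s}$ is a Lie conformal superalgebra, the associated $\mathrm{Lie}(\mathfrak{s})$ is automatically a Lie superalgebra, so there is nothing to verify about the super-Jacobi identity; the whole content of the lemma is the evaluation of the structure constants through formula~\eqref{2011}. First I would read off the nonzero $j$-th products from the four defining $\lambda$-brackets of $\mathfrak{s}$. Expanding $[a_\lambda b]=\sum_{n\ge 0}\frac{\lambda^{n}}{n!}(a_{(n)}b)$ gives $L_{(0)}L=\partial L$, $L_{(1)}L=2L$; $L_{(0)}I=\partial I$, $L_{(1)}I=I$; $L_{(0)}G=\partial G$, $L_{(1)}G=G$; and $G_{(0)}G=2I$, while all higher products, and all products with a leading $I$, vanish (the latter because $[I_\lambda I]=[I_\lambda G]=0$ in $\mathfrak{s}$).

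Next I would substitute these into the mode bracket \eqref{2011}, using the defining relation $(\partial a)_n=-n\,a_{n-1}$ of $\mathrm{Lie}(\mathfrak{s})$ to eliminate the $\partial$-terms. For the even pair this yields $[L_m,L_n]=(\partial L)_{m+n}+2m\,L_{m+n-1}=(m-n)L_{m+n-1}$, and similarly $[L_m,I_n]=-n\,I_{m+n-1}$ and $[L_m,G_n]=-n\,G_{m+n-1}$, whereas the only surviving term for the odd pair is $[G_m,G_n]=2I_{m+n}$. The vanishing products give at once $[I_m,I_n]=[I_m,G_n]=0$.

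Finally I would match these raw relations to the stated ones through the change of modes dictated by the conformal weights, namely relabelling $L_m\mapsto L_{1-m}$, $I_m\mapsto I_{-m}$ and $G_m\mapsto G_{-m}$ (a reflection $m\mapsto -m$ of all modes together with a unit shift on the $L$-modes coming from $L$ having conformal weight two). Under this relabelling the structure constants become exactly $(n-m)L_{m+n}$, $n\,I_{m+n}$, $n\,G_{m+n}$ and $2I_{m+n}$, and the spanning set $\{L_m,I_m,G_m\mid m\in\Z\}$ is in fact a basis because $\mathfrak{s}$ is a free $\C[\partial]$-module, which makes the reduced modes $\C$-linearly independent after applying $(\partial a)_n=-n\,a_{n-1}$.

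The main obstacle, and the only place where genuine care is needed, is the bookkeeping of the index shifts produced by $(\partial a)_n=-n\,a_{n-1}$ together with the sign built into the final relabelling: a naive computation produces $(m-n)L_{m+n-1}$ rather than the advertised $(n-m)L_{m+n}$, and one must check that the single substitution $m\mapsto -m$ with the shift $L_m\mapsto L_{1-m}$ simultaneously corrects the sign in all three mixed brackets while leaving $[G_m,G_n]=2I_{m+n}$ invariant.
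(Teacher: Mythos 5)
Your proposal is correct, and its skeleton is the same as the paper's: extract the nonzero $j$-th products from the $\lambda$-brackets, feed them into the mode formula \eqref{2011} together with the relation $(\partial a)_n=-na_{n-1}$, and relabel the modes at the end. Where you genuinely differ is the relabelling step, and there your bookkeeping is the one that actually works. Your raw brackets $[L_{(m)},L_{(n)}]=(m-n)L_{(m+n-1)}$, $[L_{(m)},I_{(n)}]=-nI_{(m+n-1)}$, $[L_{(m)},G_{(n)}]=-nG_{(m+n-1)}$, $[G_{(m)},G_{(n)}]=2I_{(m+n)}$ are the correct ones; the paper's intermediate display \eqref{44.2} instead records $+n$ in the two mixed brackets, and its pure shift $L_m\mapsto L_{(m+1)}$, $I_m\mapsto I_{(m)}$, $G_m\mapsto G_{(m)}$ would in any case produce $[L_m,L_n]=(m-n)L_{m+n}$, which is the opposite sign to the lemma's asserted $(n-m)L_{m+n}$. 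Your reflection-with-shift $L_m\mapsto L_{(1-m)}$, $I_m\mapsto I_{(-m)}$, $G_m\mapsto G_{(-m)}$ repairs both discrepancies at once: it turns $(m-n)L_{(m+n-1)}$ into $(n-m)L_{m+n}$, turns $-nI_{(m+n-1)}$ and $-nG_{(m+n-1)}$ into $nI_{m+n}$ and $nG_{m+n}$, and leaves $[G_m,G_n]=2I_{m+n}$ intact, exactly as you checked. Your closing observation that the relabelled modes form a basis (by freeness of $\mathfrak{s}$ over $\C[\partial]$) also makes explicit a point the paper passes over in silence. In short: same method as the paper, but your handling of the index shifts and signs --- precisely the ``obstacle'' you flagged --- is the version that actually yields the stated relations.
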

\begin{proof}
By the definition of the $k$-th product and   $\mathfrak{s}$,    we conclude that
\begin{eqnarray*}
&&L\,{}_{{}_{(0)}} L=\partial L, \ L\,{}_{{}_{(1)}} L=2 L,\  L\,{}_{{}_{(0)}} I=\partial I,\  L\,{}_{{}_{(1)}} I=I,
\\&& L\,{}_{{}_{(0)}} G=\partial G, \ L\,{}_{{}_{(1)}} G= G,\  G\,{}_{{}_{(0)}} G=2 I,
\\&& L\,{}_{{}_{(i)}} L= L\,{}_{{}_{(i)}} I=L\,{}_{{}_{(i)}} G=G\,{}_{{}_{(i-1)}} G=I\,{}_{{}_{(i-2)}} I= G\,{}_{{}_{(i-2)}} I=0,\ \forall \ i\geq2.
\end{eqnarray*}
From  \eqref{2011}, it is easy to check  that
 \begin{equation}\label{44.2}
\aligned
&[L_{(m)},L_{(n)}]=(m-n)L_{(m+n-1)},
\ [L_{(m)},I_{(n)}]=nI_{(m+n-1)},
\\&[L_{(m)},G_{(n)}]=nG_{(m+n-1)},
\ [G_{(m)},G_{(n)}]=2I_{(m+n)}.
\endaligned
\end{equation}
Then the lemma is proved by  making the shift
$$L_{m}\rightarrow L_{(m+1)},\ I_{m}\rightarrow I_{(m)},\ G_{m}\rightarrow G_{(m)}$$ in \eqref{44.2} where  $m\in\Z$.
\end{proof}
Note that  the
formal distribution Lie superalgebra $\text{Lie}(\mathfrak{s})$  is   exactly the super Heisenberg-Virasoro algebra  of
Ramond type.
\subsection{The super Heisenberg-Virasoro algebra   of  Neveu-Schwarz type}
 \textit{ The  super Heisenberg-Virasoro algebra  of  Neveu-Schwarz type} $\mathcal{T}$
 is defined as an infinite dimensional Lie superalgebra over $\C$ with basis
$\{L_m,I_m,G_{r}
\mid m\in \Z, r\in \frac{1}{2}+\Z\}$   satisfying the following  non-trivial relations
\begin{eqnarray*}
&&[L_{m},L_{n}]=(n-m)L_{m+n},
\
 [L_{m},I_{n}]=nI_{m+n},
 \\&&
  [L_{m},G_{r}]=rG_{m+r},
   \
 [G_{r},G_{s}]=2I_{r+s}
\end{eqnarray*} for $m,n\in\Z,r,s\in\frac{1}{2}+\Z$. Note that $\mathcal{T}=\mathcal{T}_{\bar0}\oplus\mathcal{T}_{\bar1}$, where $\mathcal{T}_{\bar0}=\{L_m,I_m\mid m\in\Z\}$ and  $\mathcal{T}_{\bar1}=\{G_r\mid r\in\frac{1}{2}+\Z\}$.

We know that  ${\mathcal{T}}$  is a subalgebra of   $\S$.
Let $\psi:\mathcal{T}\rightarrow \S$ be a linear map defined by
 \begin{eqnarray*}
  L_m&\mapsto&\frac{1}{2}L_{2m}
 \\I_m&\mapsto&I_{2m}
 \\G_{m+\frac{1}{2}}&\mapsto& G_{2m+1}
 \end{eqnarray*}
 for $m\in\Z$.
 It is straightforward to verify that $\psi$ is injective.

\section{Construction of simple restricted  $\S$-modules}
In this section, we will study a class of simple  restricted modules over the super Heisenberg-Virasoro algebra of
Ramond type.
\subsection{Some notations and definitions}
We denote by  $\mathbf{M}$  the set of all infinite vectors of the form $\mathbf{i}:=(\ldots, i_2, i_1)$ with entries in $\Z_+$,
satisfying the condition that the number of nonzero entries is finite and $\widehat{\mathbf{M}}=\{\mathbf{i}\in\mathbf{M}\mid i_k=0,1,\ \forall k\in\N\}$.
Write $\mathbf{0}=(\ldots, 0, 0)\in\mathbf{M}$ (or $\widehat{\mathbf{M}}$) and $\mathbb{Y}=\{0,1\}$.
For
$i\in\N$, denote  $\epsilon_i=(\ldots,0,1,0,\ldots,0)\in\mathbf{M}$ (or $\widehat{\mathbf{M}}$),
where $1$ is
in the $i$'th  position from the right. For any $\mathbf{i}\in\mathbf{M}$ (or $\widehat{\mathbf{M}}$), we write
$$\mathbf{w}(\mathbf{i})=\sum_{s\in\N}s\cdot i_s,$$
which is non-negative integer. For any    $\mathbf{0}\neq\mathbf{i}\in\mathbf{M}$ (or $\widehat{\mathbf{M}}$), assume that $p$  is the smallest integer such that $i_p\neq0$,
and define  $\mathbf{i}^\prime=\mathbf{i}-\epsilon_p$.

\begin{defi}\rm\label{defin421}
 Denote by $\succ$ the   \textit{reverse  lexicographical  total order}  on  $\mathbf{M}$ (or $\widehat{\mathbf{M}}$),  defined as follows: $\mathbf{0}$ is the minimum element;
  for different nonzero  $\mathbf{i},\mathbf{j}\in\mathbf{M}$ (or $\widehat{\mathbf{M}}$)
$$\mathbf{j} \succ \mathbf{i} \ \Longleftrightarrow
 \ \mathrm{ there\ exists} \ r\in\Z_+ \ \mathrm{such \ that} \ (j_s=i_s,\ \forall 1\leq s<r) \ \mathrm{and} \ j_r>i_r.$$
\end{defi}
Clearly, we can restrict the reverse lexicographic total order on $\mathbf{M}$ (resp. $\widehat{\mathbf{M}}$) to $\Z^n_+$ (resp. $\mathbb{Y}^n$) for $n\in \mathbb{N}$.
Then we can induce a \textit{principal total order} on $\mathbf{M}\times   \widehat{\mathbf{M}}  \times\mathbf{M}$, still denoted by $\succ$: for   $\mathbf{i},\mathbf{k},\mathbf{l},\mathbf{n}\in\mathbf{M}$, $\mathbf{j},\mathbf{m}\in\widehat{\mathbf{M}}$ and    $(\mathbf{i},\mathbf{j},\mathbf{k})\neq(\mathbf{l},\mathbf{m},\mathbf{n})$, set
\begin{eqnarray*}
&&(\mathbf{i},\mathbf{j},\mathbf{k}) \succ (\mathbf{l},\mathbf{m},\mathbf{n})
\\&&\Longleftrightarrow
\big(\mathbf{i},\mathbf{w}(\mathbf{i}),\mathbf{j},\mathbf{w}(\mathbf{j}),\mathbf{k},\mathbf{w}(\mathbf{k})\big) \succ
 \big(\mathbf{l},\mathbf{w}(\mathbf{l}),\mathbf{m},\mathbf{w}(\mathbf{m}),\mathbf{n},\mathbf{w}(\mathbf{n})\big).
\end{eqnarray*}
For any $\alpha,\beta,r,s,t\in\Z_+$ with $\alpha\geq2\beta$,   set
\begin{eqnarray*}
&&\S_{\alpha,\beta}=\bigoplus_{i\geq0}\C L_i\oplus\bigoplus_{i\geq0}\C I_{i-\alpha}\oplus\bigoplus_{i\geq0}\C G_{i-\beta},
\\&&\S^{(r,s,t)}=\bigoplus_{i\geq r}\C L_i\oplus\bigoplus_{i\geq s}
\C I_{i}\oplus\bigoplus_{i\geq t}\C G_{i}.
\end{eqnarray*}
Let  $V$ be a simple $\S_{\alpha,\beta}$-module. Then  we have the induced   $\S$-module
$$\mathrm{Ind}(V)=U(\S)\otimes_{U(\S_{\alpha,\beta})}V.$$
Considering  simple modules of the superalgebra $\S$ or one of its subalgebras   containing the central elements $I_0$,
  we always assume that the action  of $I_0$ is a scalar $c_0$.

  For   $\mathbf{i}$, $\mathbf{k}\in \mathbf{M}$,    $\mathbf{j}\in\widehat{\mathbf{M}}$,  we denote
$$I^{\mathbf{i}} G^{\mathbf{j}} L^{\mathbf{k}}=\cdots I_{-2-\alpha}^{i_2} I_{-1-\alpha}^{i_1}\cdots G_{-{2}-\beta}^{j_2} G_{-1-\beta}^{j_1}\cdots L_{-2}^{k_2} L_{-1}^{k_1}\in U(\S).$$
From the $\mathrm{PBW}$ Theorem (see \cite{CW}) and $G^2_{i}=I_{2i}$ for $i\in\Z$, every element of $\mathrm{Ind}(V)$ can be uniquely written as
follow
\begin{equation}\label{def2.1}
\sum_{\mathbf{i},\mathbf{k}\in\mathbf{M}, \mathbf{j}\in\widehat{\mathbf{M}}}I^{\mathbf{i}} G^{\mathbf{j}} L^{\mathbf{k}} v_{\mathbf{i},\mathbf{j},\mathbf{k}},
\end{equation}
where all  $v_{\mathbf{i},\mathbf{j},\mathbf{k}}\in V$ and only finitely many of them are nonzero. For any $v\in\mathrm{Ind}(V)$ as in  \eqref{def2.1}, we denote by $\mathrm{supp}(v)$ the set of all $(\mathbf{i},\mathbf{j},\mathbf{k})\in \mathbf{M}\times \widehat{\mathbf{M}}\times\mathbf{M}$  such that $v_{\mathbf{i},\mathbf{j},\mathbf{k}}\neq0$.
 For a nonzero $v\in \mathrm{Ind}(V)$,
 we write $\mathrm{deg}(v)$  the maximal element in $\mathrm{supp}(v)$ by the principal total order on $\mathbf{M}\times\widehat{\mathbf{M}}\times\mathbf{M}$,
  which is called the \textit{degree} of $v$. Note that $\mathrm{deg}(v)$ is defined only for  $v\neq0$.
\begin{defi}\rm
An $\S$-module $W$ is called \textit{restricted} if for any $w\in W$ there exists
$k\in\Z_+$   such that $L_iw=G_iw=I_iw=0$ for   $i>k$.
\end{defi}

\subsection{Simple restricted  $\S$-modules}
The purpose of this section is to obtain  some simple restricted $\S$-modules.
\begin{theo}\label{th1}
Let $V$ be a simple $\S_{\alpha,\beta}$-module. Assume that there exists $z\in\Z_+$  such that $V$ satisfies  the following two conditions:
\begin{itemize}
\item[{\rm (a)}] the action of  $I_{z}$ on   $V$  is injective,

\item[{\rm (b)}]
   $L_iV=G_jV=I_kV=0$ for  all $i>z+\alpha,j>z+\beta$ and $k>z$.
\end{itemize}
 Then we obtain that
 $\mathrm{Ind}(V)$ is a simple $\S$-module.
\end{theo}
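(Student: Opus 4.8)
The plan is to show that every nonzero $\S$-submodule $W$ of $\mathrm{Ind}(V)$ meets the subspace $1\otimes V$ nontrivially; once this is established the theorem follows quickly. Indeed, $1\otimes V$ is an $\S_{\alpha,\beta}$-submodule of $\mathrm{Ind}(V)$ isomorphic to the simple module $V$, so $W\cap(1\otimes V)$, being a nonzero $\S_{\alpha,\beta}$-submodule of $V$, must equal $1\otimes V$; since $\mathrm{Ind}(V)=U(\S)(1\otimes V)$ by construction, this forces $W=\mathrm{Ind}(V)$.

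To locate a nonzero element of $1\otimes V$ inside $W$, I would argue by the principal total order $\succ$ on $\mathbf{M}\times\widehat{\mathbf{M}}\times\mathbf{M}$: pick $0\neq w\in W$ with $\deg(w)=(\mathbf{i},\mathbf{j},\mathbf{k})$ minimal, and aim to prove $\deg(w)=\mathbf{0}$. The essential tool is three families of ``conversion operators'' attached to the bound $z$: for $s\geq1$ the element $L_{z+\alpha+s}$ satisfies $L_{z+\alpha+s}V=0$ and $[L_{z+\alpha+s},I_{-s-\alpha}]=-(s+\alpha)I_z$; the element $I_{z+s}$ satisfies $I_{z+s}V=0$ and $[I_{z+s},L_{-s}]=-(z+s)I_z$; and $G_{z+\beta+s}$ satisfies $G_{z+\beta+s}V=0$ and $[G_{z+\beta+s},G_{-s-\beta}]=2I_z$. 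Here condition (b) guarantees that all three operators annihilate $V$, while condition (a) guarantees that the output $I_z$ acts injectively on $V$; thus each operator strips one creation operator of the corresponding type and replaces it by the injective element $I_z$.

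Assuming $\deg(w)\neq\mathbf{0}$, I would produce an element of $W$ of strictly smaller degree, contradicting minimality. Let $T$ be the largest index occurring among the creation operators in the leading monomial $I^{\mathbf{i}}G^{\mathbf{j}}L^{\mathbf{k}}$, and apply the conversion operator matched to a deepest creation mode. Because its index is large, commuting it inward kills the ``pass-through'' term (one of $L_{z+\alpha+s}v=I_{z+s}v=G_{z+\beta+s}v=0$), so only commutator terms survive; one then checks that the distinguished term removes the chosen deepest creation operator and produces a nonzero scalar multiple of $I^{\mathbf{i}'}G^{\mathbf{j}}L^{\mathbf{k}}(I_z v_{\mathbf{i},\mathbf{j},\mathbf{k}})$ (or its $G$-, $L$-analogue), whose coefficient is nonzero by injectivity of $I_z$ and whose degree is strictly below $(\mathbf{i},\mathbf{j},\mathbf{k})$. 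Iterating this reduction drives the degree down to $\mathbf{0}$, placing a nonzero vector of $1\otimes V$ in $W$.

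The hard part will be the leading-term bookkeeping in this reduction step. The three families of creation operators are genuinely coupled: $L$ moves both $I$'s and $G$'s, the odd generators $G$ square to $I$'s, and moving a freshly created $I_z$ out to $v$ spawns a cascade of lower $I$-modes through the commutators $[I_z,L_{-s'}]=-zI_{z-s'}$. I expect to have to (i) organize the reduction so that the deepest modes are cleared first, in an order that prevents any of these cascades or cross-commutators from manufacturing a creation operator at a shallower position (which would \emph{raise} rather than lower the degree in the principal order, since $\mathbf{i}$ dominates and the reverse lexicographic order favours the shallowest mode), and (ii) verify that the single distinguished contribution does not cancel against the images of the non-leading monomials of $w$, again using injectivity of $I_z$. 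It is precisely in controlling the $G$-cross-terms that the standing hypothesis $\alpha\geq2\beta$ enters, forcing the indices $z+\alpha+T-s-\beta$ produced by $[L_{z+\alpha+T},G_{-s-\beta}]$ to exceed $z+\beta$, and hence to annihilate $V$, in all but the tightest boundary case.
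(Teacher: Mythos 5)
Your global strategy (drive a nonzero vector of any nonzero submodule $W$ down into $1\otimes V$, then use simplicity of $V$ and $\mathrm{Ind}(V)=U(\S)(1\otimes V)$) and your three families of conversion operators are exactly the right tools --- indeed your indices $L_{z+\alpha+s}$, $I_{z+s}$, $G_{z+\beta+s}$ are the correct ones (the paper's $G_{\tilde{j}+z}$ and $L_{\tilde{i}+z}$ appear to be typos for $G_{\tilde{j}+z+\beta}$ and $L_{\tilde{i}+z+\alpha}$, since the bracket must output the injective element $I_z$). But what you defer as ``the hard part \dots\ leading-term bookkeeping'' is the entire content of the theorem, and the scheme you sketch for it would not survive that bookkeeping. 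Two concrete failures. First, your ordering premise is backwards: you assert that ``$\mathbf{i}$ dominates'' the principal order, but with the $I$-part as the most significant component the descent collapses. Applying $I_{z+T}$ to $I^{\mathbf{x}}G^{\mathbf{y}}L^{\mathbf{z}}v$ expands into chains $[I_{z+T},L_{-s_1}]=-(z+T)I_{z+T-s_1}$, $[I_{z+T-s_1},L_{-s_2}]=\cdots$, removing a multiset of $L$-modes of total depth $\Sigma$; whenever $\Sigma\geq z+T+\alpha+1$ (unavoidable once $\mathbf{z}$ has enough or deep enough entries, no matter which mode $T$ you target) the output $I_{z+T-\Sigma}$ is a \emph{new} $I$-creation operator, producing a term with $I$-part $\mathbf{x}+\epsilon_u\succ\mathbf{x}$. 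Under an $I$-dominant order such terms have \emph{larger} degree than what you started with, so applying your operator to a minimal-degree element of $W$ yields no contradiction. Second, clearing ``deepest modes first'' across all three families forces you to apply $L$- and $G$-type operators while $L$-creation operators are still present, at which point the cross-commutators $[L_a,L_{-s}]$, $[L_a,G_{-t-\beta}]$, $[G_a,L_{-s}]$ flood the computation; controlling them is exactly what you leave open, and your appeal to $\alpha\geq2\beta$ there is a misreading: that hypothesis is what makes $\S_{\alpha,\beta}$ closed under the super-bracket (one needs $[G_{i-\beta},G_{j-\beta}]=2I_{i+j-2\beta}\in\S_{\alpha,\beta}$, i.e. $i+j-2\beta\geq-\alpha$), not a lever in the simplicity argument.

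The paper's proof is organized precisely so that no cross-family commutator ever occurs, and this organization is the missing idea. Its principal order is read with the $L$-part weight as the most significant entry, then the $G$-part, then the $I$-part (this is what steps like $\mathbf{w}(\mathbf{z}_1)<\mathbf{w}(\mathbf{k}')\Rightarrow(\mathbf{x}_1,\mathbf{y}_1,\mathbf{z}_1)\prec(\mathbf{i},\mathbf{j},\mathbf{k}')$ require). One first strips the $L$-part using $I$-type operators aimed at the \emph{shallowest} mode $\tilde{k}=\min\{s:k_s\neq0\}$: since the $I$'s supercommute with all $I$'s and $G$'s, the only surviving commutators are with $L$'s and they produce only $I$'s; any newly created deep $I$-operators come with an $L$-weight drop $\Sigma>\tilde{k}$, hence lie strictly below $(\mathbf{i},\mathbf{j},\mathbf{k}')$ because the $L$-weight is dominant, while targeting the shallowest mode makes $\Sigma=\tilde{k}$ achievable only by removing $L_{-\tilde{k}}$ itself, so the distinguished term $I^{\mathbf{i}}G^{\mathbf{j}}L^{\mathbf{k}'}(I_zv_{\mathbf{i},\mathbf{j},\mathbf{k}})$ is the unique term of top degree and is nonzero by injectivity of $I_z$. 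Once the degree reaches $(\mathbf{i},\mathbf{j},\mathbf{0})$, dominance of the $L$-weight forces \emph{every} term of the element to be free of $L$'s; then the same one-family-at-a-time argument runs with $G$-type operators (which commute with the $I$'s) on pure $I^{\mathbf{x}}G^{\mathbf{y}}v$ terms, and finally with $L$-type operators on pure $I^{\mathbf{x}}v$ terms. This sequential $L\to G\to I$ elimination, together with the $L$-dominant ordering, dissolves both difficulties (i) and (ii) that you flag; without committing to it (or something equally effective), your outline remains a plan rather than a proof.
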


\begin{proof}
 For any $v\in\ \mathrm{Ind}(V)\setminus V$, suppose that $v$ as the form  \eqref{def2.1}, i.e.,
$$v=\sum_{\mathbf{x},\mathbf{z}\in\mathbf{M}, \mathbf{y}\in\widehat{\mathbf{M}}}I^{\mathbf{x}} G^{\mathbf{y}} L^{\mathbf{z}} v_{\mathbf{x},\mathbf{y},\mathbf{z}}.$$
Denote  $$\mathrm{deg}(v)=(\mathbf{i},\mathbf{j},\mathbf{k})$$
  for $\mathbf{i},\mathbf{k}\in \mathbf{M},\mathbf{j}\in \widehat{\mathbf{M}}$,
and let
$\tilde{i}=\mathrm{min}\{s:i_s\neq0\}$ if $\mathbf{i}\neq\mathbf{0}$, ${\tilde{j}}=\mathrm{min}\{s:j_s\neq0\}$ if $\mathbf{j}\neq\mathbf{0}$,  $\tilde{k}=\mathrm{min}\{s:k_s\neq0\}$ if $\mathbf{k}\neq\mathbf{0}$.
To prove this theorem,  we have  three steps as follows.

First,  consider those $v_{\mathbf{x},\mathbf{y},\mathbf{z}}$
with $I_{\tilde{k}+z}I^{\mathbf{x}}G^{\mathbf{y}}L^{\mathbf{z}}v_{\mathbf{x},\mathbf{y},\mathbf{z}}\neq0.$
It is easy to see that
$$I_{\tilde{k}+z}I^{\mathbf{x}}G^{\mathbf{y}}L^{\mathbf{z}}v_{\mathbf{x},\mathbf{y},\mathbf{z}}=
I^{\mathbf{x}}G^{\mathbf{y}}[I_{\tilde{k}+z},L^{\mathbf{z}}]v_{\mathbf{x},\mathbf{y},\mathbf{z}}.$$
Clearly, $I_{z}v_{\mathbf{x},\mathbf{y},\mathbf{z}}\neq0$. Consider the following two cases.

If $\mathbf{z}=\mathbf{k}$, one can get
$$\mathrm{deg}(I_{\tilde{k}+z}I^{\mathbf{x}}G^{\mathbf{y}}L^{\mathbf{z}}v_{\mathbf{x},\mathbf{y},\mathbf{z}})=
(\mathbf{x},\mathbf{y},\mathbf{k}^{\prime})\preceq (\mathbf{i},\mathbf{j},\mathbf{k}^{\prime}),$$
where the equality holds if and only if $\mathbf{x}=\mathbf{i},\mathbf{y}=\mathbf{j}$.

Suppose  $(\mathbf{x},\mathbf{w}(\mathbf{x}),\mathbf{y},\mathbf{w}(\mathbf{y}),\mathbf{z},\mathbf{w}(\mathbf{z}))
\prec(\mathbf{i},\mathbf{w}(\mathbf{i}),\mathbf{j},\mathbf{w}(\mathbf{j}),\mathbf{k},\mathbf{w}(\mathbf{k}))$ and denote  $$\mathrm{deg}(I_{\tilde{k}+z}I^{\mathbf{x}}G^{\mathbf{y}}L^{\mathbf{z}}v_{\mathbf{x},\mathbf{y},\mathbf{z}})
=(\mathbf{x}_1,\mathbf{y}_1,\mathbf{z}_1)\in \mathbf{M}\times \widehat{\mathbf{M}}\times \mathbf{M}.$$ If $\mathbf{w}(\mathbf{z})<\mathbf{w}(\mathbf{k})$,
then we obtain $\mathbf{w}(\mathbf{z}_1)\leq \mathbf{w}(\mathbf{z})-\tilde{k}<\mathbf{w}(\mathbf{k})-\tilde{k}=\mathbf{w}(\mathbf{k}^{\prime})$,
which implies $(\mathbf{x}_1,\mathbf{y}_1,\mathbf{z}_1)\prec(\mathbf{i},\mathbf{j},\mathbf{k}^{\prime})$.
Then  suppose $\mathbf{w}(\mathbf{z})=\mathbf{w}(\mathbf{k})$ and ${\mathbf{z}}\prec{\mathbf{k}}$. Let $\tilde{z}:=\mathrm{min}\{s:z_s\neq0\}>0$.
If $\tilde{z}>\tilde{k}$, one can see that $\mathbf{w}(\mathbf{z}_1)=\mathbf{w}(\mathbf{z})-\tilde{z}<\mathbf{w}(\mathbf{z})-\tilde{k}=\mathbf{w}(\mathbf{k}^{\prime})$. If $\tilde{z}=\tilde{k}$,
by the similar method, we deduce that $(\mathbf{x}_1,\mathbf{y}_1,\mathbf{z}_1)=(\mathbf{x},\mathbf{y},\mathbf{z}^{\prime})$.
Then by  $\mathbf{z}^{\prime}\prec \mathbf{k}^{\prime}$, one can see that $\mathrm{deg}(I_{\tilde{k}+z}I^{\mathbf{x}}G^{\mathbf{y}}L^{\mathbf{z}}v_{\mathbf{x},\mathbf{y},\mathbf{z}})
=(\mathbf{x}_1,\mathbf{y}_1,\mathbf{z}_1)\prec(\mathbf{i},\mathbf{j},\mathbf{k}^{\prime})$ in both cases.

Combining all the arguments above we conclude that
$\mathrm{deg}(I_{\tilde{k}+z}v)=(\mathbf{i},\mathbf{j},\mathbf{k}^{\prime})$.

  Now  we consider   $v_{\mathbf{x},\mathbf{y},\mathbf{0}}$
with $G_{{\tilde{j}}+z}I^{\mathbf{x}}G^{\mathbf{y}}v_{\mathbf{x},\mathbf{y},\mathbf{0}}\neq0.$
According to $I_{z}v_{\mathbf{x},\mathbf{y},\mathbf{0}}\neq0$ for any  $(\mathbf{x},\mathbf{y},\mathbf{0})\in \mathrm{supp}(v)$,
we check that
$$G_{{\tilde{j}}+z}I^{\mathbf{x}}G^{\mathbf{y}}v_{\mathbf{x},\mathbf{y},\mathbf{0}}=
I^{\mathbf{x}}[G_{{\tilde{j}}+z},G^{\mathbf{y}}]v_{\mathbf{x},\mathbf{y},\mathbf{0}}.$$
If $\mathbf{y}=\mathbf{j}$, one can get  that
$$\mathrm{deg}(G_{{\tilde{j}}+z}I^{\mathbf{x}}G^{\mathbf{y}}v_{\mathbf{x},\mathbf{y},\mathbf{0}})=
(\mathbf{x},\mathbf{y}^{\prime},\mathbf{0})\preceq (\mathbf{i},\mathbf{j}^{\prime},\mathbf{0}),$$
where the equality holds if and only if $\mathbf{x}=\mathbf{i}$.

Suppose $(\mathbf{x},\mathbf{w}(\mathbf{x}),\mathbf{y},\mathbf{w}(\mathbf{y}),\mathbf{0},\mathbf{0})
\prec(\mathbf{i},\mathbf{w}(\mathbf{i}),\mathbf{j},\mathbf{w}(\mathbf{j}),\mathbf{0},\mathbf{0})$ . Then   we have  $$\mathrm{deg}(G_{{\tilde{j}}+z}I^{\mathbf{x}}G^{\mathbf{y}}v_{\mathbf{x},\mathbf{y},\mathbf{0}})
=(\mathbf{x_1},\mathbf{y_1},\mathbf{0})\in \mathbf{M}\times \widehat{\mathbf{M}}\times \mathbf{M}.$$ If $\mathbf{w}(\mathbf{y})<\mathbf{w}(\mathbf{j})$,
then we have $\mathbf{w}(\mathbf{y}_1)\leq \mathbf{w}(\mathbf{y})-{\tilde{j}}<\mathbf{w}(\mathbf{j})-{\tilde{j}}=\mathbf{w}(\mathbf{j}^{\prime})$,
which gives $(\mathbf{x}_1,\mathbf{y}_1,\mathbf{0})\prec(\mathbf{i},\mathbf{j}^{\prime},\mathbf{0})$.

Then we suppose $\mathbf{w}(\mathbf{y})=\mathbf{w}(\mathbf{j})$ and ${\mathbf{y}}\prec{\mathbf{j}}$. Let $\tilde{y}:=\mathrm{min}\{s:y_s\neq0\}>0$.
If $\tilde{y}>{\tilde{j}}$, we obtain $\mathbf{w}(\mathbf{y}_1)=\mathbf{w}(\mathbf{y})-{\tilde{y}}<\mathbf{w}(\mathbf{j})-{\tilde{j}}=\mathbf{w}(\mathbf{j}^{\prime})$. If $\tilde{y}={\tilde{j}}$,
we can similarly check that $(\mathbf{x}_1,\mathbf{y}_1,\mathbf{0})=(\mathbf{x},\mathbf{y}^{\prime},\mathbf{0})$.
Now from $\mathbf{y}^{\prime}\prec \mathbf{j}^{\prime}$, we have $\mathrm{deg}(G_{{\tilde{j}}+z}I^{\mathbf{x}}G^{\mathbf{y}}v_{\mathbf{x},\mathbf{y},\mathbf{0}})
=(\mathbf{x}_1,\mathbf{y}_1,\mathbf{0})\prec(\mathbf{i},\mathbf{j}^{\prime},\mathbf{0})$.
Therefore, we conclude that
$\mathrm{deg}(G_{{\tilde{j}}+z}I^{\mathbf{x}}G^{\mathbf{y}}v_{\mathbf{x},\mathbf{y},\mathbf{0}})=(\mathbf{i},\mathbf{j}^{\prime},\mathbf{0})$.

 At last, we  consider   $v_{\mathbf{x},\mathbf{0},\mathbf{0}}$
with  $L_{{\tilde{i}}+z}I^{\mathbf{x}}v_{\mathbf{x},\mathbf{0},\mathbf{0}}\neq0.$
By the similar arguments as above, we have $\mathrm{deg}(L_{{\tilde{i}}+z}I^{\mathbf{x}}v_{\mathbf{x},\mathbf{0},\mathbf{0}})=(\mathbf{i}^{\prime},\mathbf{0},\mathbf{0})$.

Thus, from any  $0\neq v\in\mathrm{Ind}(V)$ we can reach a nonzero element in
 $U(\S )v\cap V\neq0$, which gives the simplicity of $\mathrm{Ind}(V)$.
We complete  the proof.
\end{proof}
\begin{rema}\label{rema3.3} \rm
We note that  the induced modules $\mathrm{Ind}(V)$ are simple restricted $\S$-modules.
 \end{rema}



\section{Characterization of simple $\S$-modules}
\begin{prop}\label{th2}
Let ${P}$ be a simple  $\S$-module. Suppose that there exists $k\in\Z_+$ such that the action of $I_k$ on $P$ is injective. Then the following conditions are equivalent:

\begin{itemize}
\item[{\rm (1)}]  There exists $z\in\Z_+$ such that the actions of $L_i,G_{i},I_i,i\geq z$ on $P$ are locally finite.

\item[{\rm (2)}]   There exists $z\in\Z_+$ such that the actions of $L_i,G_{i},I_i,i\geq z$ on $P$ are locally nilpotent.

\item[{\rm (3)}]  There exists $r,s,t\in\Z_+$ such that  $P$ is a  locally finite $\S^{(r,s,t)}$-module.

\item[{\rm (4)}]    There exists $r,s,t\in\Z_+$ such that  $P$ is a  locally nilpotent $\S^{(r,s,t)}$-module.

\item[{\rm (5)}]  There exists    a simple $\S_{\alpha,\beta}$-module $V$ satisfying the conditions in Theorem \ref{th1}  such that
$P\cong\mathrm{Ind}(V)$.
\end{itemize}
\end{prop}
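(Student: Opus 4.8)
The plan is to prove the cycle $(1)\Rightarrow(2)\Rightarrow(4)\Rightarrow(5)\Rightarrow(1)$ and to attach $(3)$ through the two elementary implications $(4)\Rightarrow(3)$ and $(3)\Rightarrow(1)$; this yields the equivalence of all five conditions. The elementary implications are immediate: local nilpotence implies local finiteness (as noted right after the relevant definition), so $(4)\Rightarrow(3)$; and choosing $z=\max\{r,s,t\}$ puts every generator $L_i,G_i,I_i$ with $i\geq z$ into $\S^{(r,s,t)}$, so a local finiteness statement for the whole subalgebra restricts to each generator, giving $(3)\Rightarrow(1)$. The content sits in the three remaining implications $(1)\Rightarrow(2)$, $(2)\Rightarrow(4)$ and, above all, $(4)\Rightarrow(5)$.

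The finiteness-to-nilpotence step $(1)\Rightarrow(2)$ follows the Mazorchuk--Zhao method. Fixing $v$, local finiteness of the positive generators produces a finite-dimensional subspace stable under the positive operators, on which the finitely generated, index-graded structure of the positive part forces the high-index generators to act nilpotently. Two features of $\S$ simplify this: the odd generators need no separate analysis, since $G_i^2=I_{2i}$ converts nilpotence of $I_{2i}$ into that of $G_i$; and the injectivity of $I_k$ fixes the threshold, because an injective operator is never locally nilpotent, so $(2)$ is consistent only with $z>k$. The step $(2)\Rightarrow(4)$ is then the assertion that, for $r,s,t$ chosen just above $z$, local nilpotence of a finite generating set of the $\Z_+$-graded subalgebra $\S^{(r,s,t)}$ propagates through the degree-raising brackets to local nilpotence of the whole subalgebra.

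The heart of the proof is $(4)\Rightarrow(5)$. Assuming $P$ is locally nilpotent over $\S^{(r,s,t)}$ and that $I_k$ is injective, I would fix thresholds $z,\alpha,\beta$ with $\alpha\geq 2\beta$ (the constraint making $\S_{\alpha,\beta}$ a subalgebra, forced by $G_{-\beta}^2=I_{-2\beta}$) and set
$$V=\{v\in P\mid I_iv=0\ (i>z),\ G_jv=0\ (j>z+\beta),\ L_iv=0\ (i>z+\alpha)\}.$$
I would check that $V\neq 0$ (local nilpotence yields vectors annihilated by all sufficiently deep positive operators), that $V$ is $\S_{\alpha,\beta}$-stable (each defining condition survives the generators by short commutator computations such as $L_mI_{-\alpha}v=-\alpha I_{m-\alpha}v$, which vanishes for large $m$), that $I_z$ acts injectively on $V$, and that $V$ is simple over $\S_{\alpha,\beta}$; this reproduces exactly the hypotheses of Theorem \ref{th1}. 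The canonical map $\mathrm{Ind}(V)=U(\S)\otimes_{U(\S_{\alpha,\beta})}V\to P$ is surjective because $P$ is simple and injective by the leading-term estimate in the principal total order carried out in the proof of Theorem \ref{th1}, so $P\cong\mathrm{Ind}(V)$. The remaining implication $(5)\Rightarrow(1)$ is then immediate: on $\mathrm{Ind}(V)$ a positive operator of large index strictly lowers the degree of any element and hence acts locally nilpotently, a fortiori locally finitely.

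The main obstacle I anticipate is precisely this construction: selecting $z,\alpha,\beta$ so that $V$ is simultaneously nonzero, $\S_{\alpha,\beta}$-stable, injective under $I_z$, and simple, and then establishing injectivity of the canonical map. Simplicity of $V$ is the subtlest point, because it is entangled with the induced structure — one must rule out that a proper $\S_{\alpha,\beta}$-submodule of $V$ generates a proper $\S$-submodule of the simple module $P$ — and injectivity of $\mathrm{Ind}(V)\to P$ requires re-running the degree and PBW bookkeeping of Section 4 in essentially the same form as in Theorem \ref{th1}.
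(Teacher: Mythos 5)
Your treatment of the heart of the matter, $(4)\Rightarrow(5)$, together with the elementary implications $(4)\Rightarrow(3)$, $(3)\Rightarrow(1)$ and $(5)\Rightarrow(1)$, matches what the paper actually does: the paper's only hard step is $(1)\Rightarrow(5)$, and its second half is exactly your construction (the joint-kernel space $\V_{a,b,c}$ with the smallest admissible $c$ giving injectivity of $I_c$, the constraint $\alpha>2\beta$, stability under $\S_{\alpha,\beta}$, the canonical surjection $\mathrm{Ind}(V)\rightarrow P$ made injective by re-running the degree argument of Theorem \ref{th1}, with simplicity of $V$ falling out at the end). The problem is your architecture: the cycle $(1)\Rightarrow(2)\Rightarrow(4)\Rightarrow(5)\Rightarrow(1)$ forces you to prove $(1)\Rightarrow(2)$ and $(2)\Rightarrow(4)$ directly, and both of these sketches have genuine gaps. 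The paper deliberately avoids them: it proves $(1)\Rightarrow(5)$ in one stroke and then obtains $(2)$, $(3)$, $(4)$ from $(5)$ in the easy direction, together with $(2)\Rightarrow(1)$.

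Concretely, your $(2)\Rightarrow(4)$ asserts that local nilpotence of the individual generators $L_i,G_i,I_i$, $i\geq z$, ``propagates through the degree-raising brackets'' to local nilpotence of the whole subalgebra $\S^{(r,s,t)}$. As a formal propagation principle this is false. The elements $I_i$, $i\geq s$, span an infinite-dimensional abelian subalgebra of $\S^{(r,s,t)}$, and for such an algebra basis-wise nilpotence does not imply joint nilpotence: let $\mathfrak{a}$ be the abelian Lie algebra with basis $\{y_i\}_{i\in\N}$ acting on $\C[x_1,x_2,\ldots]/\bigl(x_i^2\mid i\in\N\bigr)$ by $y_i\mapsto x_i\cdot$; every $y_i$ acts with square zero, hence locally nilpotently, yet $y_1y_2\cdots y_n\cdot\bar{1}=\overline{x_1\cdots x_n}\neq0$ for every $n$, so $\mathfrak{a}^n$ never kills $\bar{1}$. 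Thus any correct proof of $(2)\Rightarrow(4)$ must invoke simplicity of $P$ and injectivity of $I_k$ --- in practice it must pass through $(5)$, which defeats the purpose of placing this step inside your cycle. Your $(1)\Rightarrow(2)$ has the same defect one level down: the Mazorchuk--Zhao argument you invoke produces (as in the paper) a finite-dimensional $\S^{(z,z,z)}$-stable subspace $V'$ on which all sufficiently high-index generators act by zero, but $(2)$ is a statement about \emph{every} vector of $P$, and extending nilpotence from $V'$ to $P=U(\S)V'$ is not addressed; the natural way to do it is to first prove $P\cong\mathrm{Ind}(V)$, i.e.\ to prove $(5)$. The repair is simply to reorganize along the paper's lines: run your kernel-space construction directly from the annihilation subspace $V'$ produced under hypothesis $(1)$ (this is the paper's $(1)\Rightarrow(5)$, using the minimal-relation trick $(L_m+a_1L_{m+1}+\cdots+a_nL_{m+n})V'=0$ to get annihilation, then the spaces $\V_{a',b',c'}$), and let $(2)$, $(3)$, $(4)$ follow from $(5)$.
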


\begin{proof}
 Note that $(5)\Rightarrow(3)\Rightarrow(1)$, $(5)\Rightarrow(4)\Rightarrow(2)$
 and $(2)\Rightarrow(1)$ are easy to get.

Now we    prove $(1)\Rightarrow(5)$.
Assume that $P$ is a simple $\S$-module and  there exists $z\in\Z_+$ such that the actions of $L_i,G_{i},I_i,i\geq z$ on $P$ are locally finite. Then we can choose a $0\neq v\in P$ such that $L_zv=\gamma v$ for some $\gamma\in \C$.

In the following, we always assume that  $X\in\{L,I,G\}$ for simplicity. Choose   $j\in\Z_+$ with $j> z$ and we  denote
 \begin{eqnarray*}
 &&S_X=\sum_{m\in\Z_+}\C L_{z}^mX_{j}v={U}(\C L_{z})X_{j}v,
 \end{eqnarray*}
 which are all finite dimensional. From the definition of $\S$ and any $m\in\Z_+$,  we check that
\begin{eqnarray*}
&&X_{j+mz}v\in S_X\Rightarrow X_{j+(m+1)z}v\in S_X,
\end{eqnarray*}
where  $j>z$.
According to  induction on $m\in\Z_+$, we   have
 $X_{j+mz}v\in S_X$.
As a matter of fact,  we  know that  $\sum_{m\in\Z_+}\C X_{j+mz}v$  are
finite dimensional for  $j> z$. Hence,
 \begin{eqnarray*}
&&\sum_{i\in\Z_+}\C X_{z+i}v=\C X_{z}v+\sum_{j=z+1}^{2z}\big(\sum_{m\in\Z_+}\C X_{j+mz}v\big)
\end{eqnarray*}
 are all finite dimensional. Now    we can choose $l\in\Z_+$ such that
\begin{eqnarray*}
\sum_{i\in\Z_+}\C X_{z+i}v=\sum_{i=0}^{l}\C X_{z+i}v.
\end{eqnarray*}
Then denote  $V^\prime=\sum_{x_0,\ldots,x_l,y_0,\ldots,y_l\in\N,z_0,\ldots,z_l\in\mathbb{Y}}\C L_{z}^{x_0}\cdots
  L_{z+l}^{x_l}I_{z}^{y_0}\cdots
  I_{z+l}^{y_l}G_{z}^{z_0}\cdots
  G_{z+l}^{z_l}v$.
It is easy to see that
 $V^\prime$ is a (finite dimensional) $\S^{(z,z,z)}$-module.

It follows that we can  take
 a minimal $n\in\Z_+$ such that
 \begin{equation}\label{lm3.3}
 (L_m+a_1L_{m+1}+\cdots + a_{n}L_{m+{n}})V^\prime=0
 \end{equation}
 for some $m> z$ and  $a_i\in \C,i=1,\ldots,n$.
 Applying $L_m$ to \eqref{lm3.3}, one gets
$$(a_1[L_m,L_{m+1}]+\cdots +a_{n}[L_m,L_{m+n}])V^\prime=0,$$
which shows $n=0$, that is, $L_mV^\prime=0$ for $m> z$.
Therefore, for $m,i> z$, we have
$$0=G_{i}L_{m}V^\prime=[G_{i},L_{m}]V^\prime+L_m(G_{i}V^\prime)=-i G_{m+ i}V^\prime,$$
namely, $G_{m+i}V^\prime=0$. Similarly, we have  $I_{m+i} V^\prime=L_{m+i} V^\prime=0$ for all $i> z$.

 For any $a^\prime,b^\prime,c^\prime\in \Z$, we consider the vector space
 $$\V_{a^\prime,b^\prime,c^\prime}=\{v\in P\mid L_iv=G_{j}v=I_kv=0 \quad \mbox{for\ any}\ i>a^\prime,j>b^\prime,k>c^\prime\}.$$
Clearly, $\V_{a^\prime,b^\prime,c^\prime}\neq0$ for sufficiently large $a^\prime,b^\prime,c^\prime\in\Z_+$.
 On the other hand, $\V_{a^\prime,b^\prime,c^\prime}=0$ for all $c^\prime<0$ since there exists $k\in\Z_+$ such that the action of  $I_k$ on $P$ is injective.
Thus we can find a smallest non-negative integer  $c$, and choose some  $a,b> c$ with $a+c>2b$ such that  $\V_{a,b,c}\neq 0$.  Denote  $\alpha=a-c, \beta=b-c$ and  $V:=\V_{a,b,c}$.
Take  $r_1>a,r_2>b,r_3>c$. For $s\geq0$,   it follows from $r_1+s-\beta>b$  and $r_2+s-\beta>c$ that we can easily check that
\begin{eqnarray*}
&&L_{r_1}(G_{s-\beta}v)=(s-\beta)G_{r_1+s-\beta}v=0,
\\&&
G_{r_2}(G_{s-\beta}v)=2I_{r_2+s-\beta}v=0,\ I_{r_3}(G_{s-\beta}v)=0.
\end{eqnarray*}
Then $G_{s-\beta}v\in V$ for
all $s\geq 0$.
Considering $d\geq0$,     we have
\begin{eqnarray*}
&&L_{r_1}(I_{d-\alpha}v)=(d-\alpha)I_{r_1+d-\alpha}v=0,\
G_{r_2}(I_{d-\alpha}v)=I_{r_3}(I_{d-\alpha}v)=0,
\end{eqnarray*}
which shows $I_{d-\alpha}v\in V$. Similarly, we can also obtain   $L_{d}v\in V$
for all $d\in \Z_+$. Therefore, $V$ is an $\S_{\alpha,\beta}$-module, and  defined in Theorem \ref{th1}.


For $c\in\Z_+$, by the definition of $V$, we get that the action of  $I_{c}$  on $V$ is injective. Note that if $c=0$ we have $c_0\neq0$. Since $P$ is simple
and generated by $V$,   there exists a canonical surjective map
$$\pi:\mathrm{ Ind}(V) \rightarrow P, \quad \pi(1\otimes v)=v,\quad \forall  v\in V.$$
Next, we only need to prove that $\pi$ is also injective, i.e., $\pi$ as the canonical map is bijective.  Denote $K=\mathrm{ker}(\pi)$. Obviously, $K\cap V=0$. If	$K\neq0$, we can choose $0\neq v\in K\setminus V$ such that $\mathrm{deg}(v)=(\mathbf{i},\mathbf{j},\mathbf{k})$ is minimal possible.
Note that $K$ is an $\S $-submodule of $\mathrm{Ind}(V)$.
By the similar   proof in Theorem \ref{th1}, we can create a new vector $u\in K$  with $\mathrm{deg}(u)\prec(\mathbf{i},\mathbf{j},\mathbf{k})$, which  leads to a contradiction. This forces $K=0$,
i.e., $P\cong \mathrm{Ind}(V)$. Then we see that $V$ is a simple   $\S$-module.
This completes the proof.
\end{proof}

Now we present the main results of this section.
\begin{theo}\label{th3}
Let  $W$ be a  simple restricted $\S$-module. Suppose that there exists $k\in\Z_+$ such that the action of $I_k$ on $W$ is injective.  Then   $W\cong\mathrm{Ind}(V)$, where $V$ is a simple $\widehat{{\S}}^{(\alpha,\beta,z)}$-module,
 and $\widehat{{\S}}^{(\alpha,\beta,z)}={\S}_{(\alpha,\beta)}/\S^{(z+\alpha+1,z+1,z+\beta+1)}$ is a quotient algebra for some $z\in\Z_+$.
\end{theo}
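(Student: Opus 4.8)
My plan is to adapt the argument proving $(1)\Rightarrow(5)$ in Proposition \ref{th2}, replacing the injectivity hypothesis used there by the restrictedness of $W$. I keep the subspaces
$$\V_{a',b',c'}=\{w\in W\mid L_iw=G_jw=I_kw=0 \ \text{for}\ i>a',\ j>b',\ k>c'\}$$
from that proof. Since $W$ is restricted, every nonzero vector lies in $\V_{a',b',c'}$ once the parameters are large, so these spaces are nonzero for large parameters. I would then select $a,b,c$ with $a,b>c\geq 0$ and $a+c>2b$ such that $\V_{a,b,c}\neq0$ while $c$ is as small as possible among non-negative values, and set $z=c$, $\alpha=a-c$, $\beta=b-c$, $V:=\V_{a,b,c}$. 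By construction $\alpha,\beta,z\in\Z_+$ and $\alpha\geq 2\beta$.

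Next I would check, by the same bracket computations that close the proof of Proposition \ref{th2}, that $V$ is stable under $\S_{\alpha,\beta}$ and that $\S^{(z+\alpha+1,z+1,z+\beta+1)}$ annihilates $V$; hence $V$ descends to a module over the finite-dimensional quotient $\widehat{\S}^{(\alpha,\beta,z)}=\S_{\alpha,\beta}/\S^{(z+\alpha+1,z+1,z+\beta+1)}$. The minimality of $c$, with $a,b$ held fixed, then gives that $I_z$ acts injectively on $V$: if $I_zv=0$ for some $0\neq v\in V$, then, as $I_kv=0$ already holds for $k>z$, we get $v\in\V_{a,b,c-1}$, which is $0$ by minimality (when $c\geq 1$). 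Thus $V$ satisfies conditions (a) and (b) of Theorem \ref{th1}, with $z+\alpha=a$, $z+\beta=b$, $z=c$.

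The remaining step is the isomorphism $W\cong\mathrm{Ind}(V)$. As $W$ is simple and $V\neq0$, we have $W=U(\S)V$, so there is a canonical surjection $\pi:\mathrm{Ind}(V)\to W$, $\pi(1\otimes v)=v$, whose kernel $K$ is an $\S$-submodule with $K\cap V=0$. If $K\neq0$, I would choose $0\neq u\in K$ of minimal degree in the principal total order and apply the degree-lowering operators $I_{\tilde k+z}$, $G_{\tilde j+z}$, $L_{\tilde i+z}$ exactly as in the proof of Theorem \ref{th1}; the injectivity of $I_z$ on $V$ is precisely what keeps each lowered vector nonzero, so this manufactures an element of $K$ of strictly smaller degree, a contradiction. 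Hence $K=0$ and $W\cong\mathrm{Ind}(V)$. Finally, simplicity of $W$ forces $V$ to be simple over $\widehat{\S}^{(\alpha,\beta,z)}$: a proper nonzero $\widehat{\S}^{(\alpha,\beta,z)}$-submodule $V_0\subsetneq V$ would, by freeness of induction, satisfy $U(\S)V_0\cap V=V_0$ and thereby give a proper nonzero $\S$-submodule of $W$.

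The step I expect to be the real obstacle is the one glossed over in the first paragraph: showing that for an arbitrary simple restricted $W$ the threshold can be taken in $\Z_+$, equivalently that some $I_z$ with $z\geq 0$ acts injectively on the corresponding $V$. When the central scalar $c_0$ (the value of $I_0$) is nonzero this is immediate, since $I_0$ is then injective, forcing $\V_{a,b,c'}=0$ for all $c'<0$ and hence $c\geq0$ with $I_z$ injective even in the borderline case $c=0$; this essentially recovers Proposition \ref{th2} with $k=0$. When $c_0=0$, however, $I_0$ is no longer injective, and, as highest weight modules with $c_0=0$ already show, every $I_z$ with $z\geq0$ may kill a generating vector, so the minimal non-negative threshold need not yield an injective top mode. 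Treating this case seems to require extra structural input: exploiting that the annihilator in $W$ of the abelian ideal $\mathrm{span}_{\C}\{I_m\mid m\in\Z\}$ is an $\S$-submodule (hence $0$ or all of $W$), and using the super-relation $[G_m,G_n]=2I_{m+n}$, equivalently $G_i^2=I_{2i}$, either to relocate an injective top mode or to reduce to the super Virasoro situation. Making the construction uniform in $c_0$ is, to my mind, the crux of the proof.
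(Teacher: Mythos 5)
In the regime where some $I_k$ ($k\in\Z_+$) acts injectively, your argument is correct and is essentially the paper's own: the paper's proof of Theorem \ref{th3} uses restrictedness to show that $L_i,G_i,I_i$ act locally nilpotently on $W=U(\S)w$ for $i\gg0$, and then simply cites the implication (2)$\Rightarrow$(5) of Proposition \ref{th2}; you instead inline the (1)$\Rightarrow$(5) argument, using restrictedness directly to get $\V_{a,b,c}\neq0$ for large parameters. Your observation that minimality of $c$ forces $I_c$ to act injectively on $V$ when $c\geq1$ is exactly the corresponding step in the paper's proof of Proposition \ref{th2}, and your kernel argument and deduction of simplicity of $V$ also match. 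So there is no divergence of method where the two proofs overlap.

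The obstacle you single out in your last paragraph is a genuine gap --- but it is a gap in the paper, not a defect of your attempt relative to it. Proposition \ref{th2} carries the standing hypothesis that some $I_k$ acts injectively on $P$; Theorem \ref{th3} drops that hypothesis, and the paper's proof invokes the proposition without ever verifying it. Nothing in the paper handles the case in which no $I_k$ acts injectively, and no argument could, because the statement fails there. Concretely: take a nontrivial simple restricted module over the Witt subalgebra $\mathrm{span}_\C\{L_m\mid m\in\Z\}$ (for instance a simple highest weight Virasoro module of central charge zero), place it entirely in even parity, and let every $I_m$ and every $G_m$ act by zero; all super-brackets, in particular $[G_m,G_n]=2I_{m+n}$, are respected, so this is a simple restricted $\S$-module. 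It is not isomorphic to any $\mathrm{Ind}(V)$, since by the PBW basis \eqref{def2.1} one has $I_{-1-\alpha}(1\otimes v)=I_{-1-\alpha}\otimes v\neq0$, i.e.\ $I_{-1-\alpha}$ never annihilates an induced module, whereas it annihilates the module just constructed. Hence Theorem \ref{th3} is only true with the injectivity hypothesis of Proposition \ref{th2} restored, which is precisely the regime your proof covers; your suspicion that the case of no injective $I_z$ requires ``extra structural input'' should be sharpened to: that case has a genuinely different answer (at minimum, the modules on which all $I_m$ vanish factor through the quotient $\S/\mathrm{span}_\C\{I_m\mid m\in\Z\}$ and never arise as $\mathrm{Ind}(V)$), and the tools you name --- the annihilator of the ideal $\mathrm{span}_\C\{I_m\mid m\in\Z\}$ being $0$ or $W$, together with $G_i^2=I_{2i}$ --- are the right way to isolate that family, not to absorb it into the induced-module classification.
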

\begin{proof}
Assume $0\neq w\in W$. We see that  there exists $t\in\Z_+$ such that $L_iw=G_{i}w=I_iw=0$ for all $i\geq t$. Every element $\widehat{w}$ in $W$ can be uniquely written as follow
$$\widehat{w}=\sum_{\mathbf{i},\mathbf{k}\in\mathbf{M}, \mathbf{j}\in\widehat{\mathbf{M}}}I^{\mathbf{i}} G^{\mathbf{j}} L^{\mathbf{k}} w.$$
Thus, for $i\geq t$, there exists a sufficiently large  $k\in\Z_+$ such that
 $$L_i^kW=G_{i}^kW=I_i^kW=0.$$
 This shows that  there exists $t\in\Z_+$ such that
the actions of $L_i, I_i, G_i$ for all $i\geq t$ on $W$ are locally nilpotent.
Then based on Proposition \ref{th2}, we have the results.
 \end{proof}

\section{Examples}
For convenience,
 we write
 $T{(r)}=\S^{(r,r,r)}$ for $r\in\Z_+$.  
Note that $T{(0)}=\S_0\oplus \S_+.$

\subsection{Simple    induced $\S$-modules }
\begin{prop}\label{th321}
Let $s\in\Z_{>1}$  and  $V$ be a simple $\S^{(0,s,0)}$-module. Assume that there exists  $z\in\Z_+$ with $z\geq s$ such that
 $I_{z}$ on   $V$  is injective,
   $L_iV=I_jV=G_kV=0$ and $I_0V=c_0 V$ for  all $i>z,j>z,k>z$ and $c_0\in\C$.
Then we obtain that
\begin{itemize}
$\mathrm{Ind}^{T(0)}_{\S^{(0,s,0)}}(V)$ is a simple $T(0)$-module, $I_{z}$ acts injectively on $\mathrm{Ind}^{T(0)}_{\S^{(0,s,0)}}(V)$
and $$L_{i}(\mathrm{Ind}^{T(0)}_{\S^{(0,s,0)}}(V))=I_j(\mathrm{Ind}^{T(0)}_{\S^{(0,s,0)}}(V))
=G_{k}(\mathrm{Ind}^{T(0)}_{\S^{(0,s,0)}}(V))=0,$$
where $i>z,j>z,k>z$.
\end{itemize}
\end{prop}

\begin{proof}
 Let $\succ$ be the reverse lexicographic total order on $\Z_+^{s}$  (see Definition \ref{defin421}).
 For any  $v\in\mathrm{Ind}^{T(0)}_{\S^{(0,s,0)}}(V)$, by $\mathrm{PBW}$ Theorem we may write $v$ in the form
 $$\sum_{\mathbf{x}\in\Z_+^{s}}I^{\mathbf{x}} v_{\mathbf{x}},$$ where
$I^{\mathbf{x}}=I_{s-1}^{x_{s-1}}I_{s-2}^{x_{s-2}}\cdots I_1^{x_1}$.
Let $\mathrm{supp}(v)$ be the set of all $\mathbf{i}$  with $v_{\mathbf{i}}\neq0$,  and let $\mathrm{deg}(v)$ be the maximal element of $\mathrm{supp}(v)$ with respect to reverse lexicographic order on $\Z_+^{s}$. We can check that   $$L_iv=I_{j}v=G_{k}v=0$$ for $i>z,j>z,k>z$.

Now for any $v\in \mathrm{Ind}^{T(0)}_{\S^{(0,s,0)}}(V)\setminus V$, let $\mathrm{deg}(v)=\mathbf{i}$ and
 $\widehat{s}=\mathrm{min}\{s :x_s\neq0\}$. We only need to show that
$\mathrm{deg}(L_{z-\widehat{s}}I^{\mathbf{k}} v_{\mathbf{k}})=\mathbf{k}^\prime\preceq\mathbf{i}^\prime$.
Then we can get that $\mathrm{Ind}^{T(0)}_{\S^{(0,s,0)}}(V)$ is a simple $T(0)$-module.
\end{proof}
It is clear that the induced module $W=\mathrm{Ind}^{T(0)}_{\S^{(0,s,0)}}(V)$ satisfies  the conditions in  Theorem \ref{th1} with $\alpha=\beta=0$. Then we obtain     simple $\S$-modules $\mathrm{Ind}(W)$.
\subsection{Highest weight modules}
For $h,c_0\in\C$, let $V_h$ be a 1-dimensional vector space over $\C$
spanned by $v_h$, namely, $V_h=\C v_h$. Regard $V_h$ as an $\S_0$-module such that  $L_0v=hv, I_0v=c_0v$.
Then $V_h$ is a $T{(0)}$-module by setting $\S_+\cdot V_h=0$. The Verma module $\mathfrak{V}(h,c_0)$ over $\S$
can be defined by
$$\mathfrak{V}(h,c_0)=U(\S)\otimes_{U(\S_0\oplus\S_+)}V_h,$$
and has the highest weight $(h,c_0)$. It is straightforward to check that   $\mathfrak{V}(h,c_0)$  is a
simple $\S$-module if $c_0\neq0$.
These simple modules are exactly the highest weight modules in Theorem \ref{th1} for $\alpha=\beta=z=0$.
\subsection{Whittaker modules}
We  recall the definition of the  classical Whittaker modules.
\begin{defi}\rm
 Assume that $\phi:T(1)\rightarrow\C$ is a Lie superalgebra homomorphism.
For $c_0\in\C$, an $\S$-module $\mathcal{W}$ is called \textit{a Whittaker module of type $(\phi,c_0)$} if

\noindent
{\rm (1)}  $\mathcal{W}$  is generated by a homogeneous vector $u$,

\noindent
{\rm (2)}   $xu=\phi(x)u$ for any $x\in T(1)$,

\noindent
{\rm (3)}  $I_0u=c_0u$,

where $u$ is called \textit{a Whittaker vector of $\mathcal{W}$}.
\end{defi}
\begin{lemm}\label{le6.1}
Let $V$ be the finite dimensional simple $T(1)$-module. Then $\mathrm{dim}(V)=1$.
\end{lemm}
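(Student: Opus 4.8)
The plan is to show that all of $T(1)$ acts ``nilpotently enough'' on $V$, reduce to a finite-dimensional nilpotent quotient, and finish with the super-analogue of Lie's theorem. Write $\rho\colon T(1)\to\mathrm{End}(V)$ for the representation and $\bar X=\rho(X)$ for $X\in T(1)$. Recall that $T(k)=\S^{(k,k,k)}=\mathrm{span}_{\C}\{L_m,I_m,G_m\mid m\ge k\}$ is an ideal of $T(1)$ with $[T(1),T(k)]=T(k+1)$, the relevant brackets being $[L_1,L_m]=(m-1)L_{m+1}$, $[L_1,I_m]=mI_{m+1}$, $[L_1,G_m]=mG_{m+1}$ and the analogous ones for $L_2$.

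The key reduction I would prove first is that $\rho(T(N))=0$ for some $N\in\Z_+$. Since $\rho(T(1))\supseteq\rho(T(2))\supseteq\cdots$ is a descending chain of subspaces of the finite-dimensional space $\mathrm{End}(V)$, it stabilizes, say $Z:=\rho(T(m_0))=\rho(T(m_0+1))=\cdots$ with $m_0\ge 3$. Because $[\bar L_1,Z]=\rho([L_1,T(m_0)])=\rho(T(m_0+1))=Z$, the operator $\mathrm{ad}\,\bar L_1$ maps $Z$ onto $Z$, hence is bijective on $Z$; likewise $\mathrm{ad}\,\bar L_2$ maps $Z$ into $Z$. Now set $C=(\mathrm{ad}\,\bar L_1)^{-2}\circ\mathrm{ad}\,\bar L_2\in\mathrm{End}(Z)$. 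A direct computation from the brackets gives, for every $m\ge m_0$,
\[
C\bar L_m=\tfrac{m-2}{m(m-1)}\,\bar L_m,\qquad C\bar I_m=\tfrac{1}{m+1}\,\bar I_m,\qquad C\bar G_m=\tfrac{1}{m+1}\,\bar G_m,
\]
so each of $\bar L_m,\bar I_m,\bar G_m$ is an eigenvector of the single operator $C$ on the finite-dimensional space $Z$. Since the scalars $\tfrac{m-2}{m(m-1)}$ and $\tfrac{1}{m+1}$ take infinitely many distinct values as $m\to\infty$, all but finitely many of the $\bar L_m,\bar I_m,\bar G_m$ must vanish; the bijectivity of $\mathrm{ad}\,\bar L_1$ on $Z$ then cascades this downward (if $\bar X_m=0$ for all large $m$ then $\mathrm{ad}\,\bar L_1$ injective forces $\bar X_{m-1}=0$, down to $m_0$), whence $Z=0$, i.e. $\rho(T(m_0))=0$.

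With $N:=m_0$ fixed, $V$ becomes a finite-dimensional simple module over the quotient $\bar{\mathfrak{g}}:=T(1)/T(N)$. This is a finite-dimensional Lie superalgebra which is positively graded with brackets strictly raising the degree and with no component in degree $\ge N$; hence its lower central series terminates and $\bar{\mathfrak{g}}$ is nilpotent, in particular solvable. Applying Lie's theorem to the nilpotent even part $\bar{\mathfrak{g}}_{\bar 0}$, I would pick a common eigenvector $v_0\in V$ with $xv_0=\chi(x)v_0$ for all $x\in\bar{\mathfrak{g}}_{\bar 0}$, where the character $\chi$ vanishes on $[\bar{\mathfrak{g}}_{\bar 0},\bar{\mathfrak{g}}_{\bar 0}]$. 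For $n\ge 2$ one has $I_n=\tfrac{1}{n-1}[L_1,I_{n-1}]\in[\bar{\mathfrak{g}}_{\bar 0},\bar{\mathfrak{g}}_{\bar 0}]$, so $\chi(I_n)=0$; since $[G_m,G_n]=2I_{m+n}$ with $m+n\ge 2$, the symmetric form $(x,y)\mapsto\chi([x,y])$ on $\bar{\mathfrak{g}}_{\bar 1}=\mathrm{span}\{\bar G_m\}$ is identically zero. By the super-analogue of Lie's theorem for solvable Lie superalgebras, a finite-dimensional simple module on which this form vanishes is one-dimensional, giving $\dim V=1$.

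The main obstacle is the reduction $\rho(T(N))=0$: the algebra $T(1)$ is neither nilpotent nor solvable, so no structural theorem applies directly, and the soft descending-chain argument only produces the stable subspace $Z$. The real content is the eigenvalue computation, which replaces the weighted-shift action of $\mathrm{ad}\,\bar L_1$ by the diagonalizable operator $C$ whose eigenvalues separate the generators; this is what forces $Z=0$ in finite dimension and encodes the over-determinacy of the Virasoro-type relations. I would double-check the bracket identities yielding the eigenvalues of $C$ together with the equality $[\bar L_1,Z]=Z$, and verify the super-Lie endgame (the vanishing of $\chi([\cdot,\cdot])$ on the odd part) carefully, as that is precisely what excludes a $2$-dimensional simple module.
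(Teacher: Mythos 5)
Your proof is correct, and its core step — the reduction to a finite-dimensional quotient — goes by a genuinely different route than the paper's. The paper's proof opens by simply asserting that $L_mV=0$ for some $m\gg 0$; the implicit justification is the trick spelled out in the proof of Proposition \ref{th2}: finite-dimensionality of $V$ gives a minimal-length relation $(L_m+a_1L_{m+1}+\cdots+a_nL_{m+n})V=0$, and bracketing with $L_m$ produces a shorter one, forcing $n=0$; the vanishing is then propagated via $[L_m,I_n]=nI_{m+n}$ and $[L_m,G_n]=nG_{m+n}$ to kill an entire tail $T(N)$. You reach the same conclusion $\rho(T(N))=0$ (indeed $\rho(T(m_0))=0$) differently: stabilize the chain $\rho(T(1))\supseteq\rho(T(2))\supseteq\cdots$, note that $\mathrm{ad}\,\bar L_1$ is bijective on the stable image $Z$, and separate the generators by eigenvalues of $C=(\mathrm{ad}\,\bar L_1)^{-2}\circ\mathrm{ad}\,\bar L_2$. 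I checked the bracket constants and the inverse-operator evaluations — legitimate, since $(\mathrm{ad}\,\bar L_1)^2$ is a bijection of $Z$ carrying $\frac{1}{m(m-1)}\bar L_m$ to $\bar L_{m+2}$, and similarly for $\bar I_m,\bar G_m$ — and the eigenvalue-separation argument is sound. The endgame is essentially the same in both proofs: $T(1)/T(N)$ is nilpotent, and one invokes the super analogue of Lie's theorem; the paper cites Lemma 1.33 of \cite{CW} without verifying its hypothesis $[\mathfrak{g}_{\bar 1},\mathfrak{g}_{\bar 1}]\subseteq[\mathfrak{g}_{\bar 0},\mathfrak{g}_{\bar 0}]$, whereas your computation ($[G_m,G_n]=2I_{m+n}$ and $I_{m+n}=\frac{1}{m+n-1}[L_1,I_{m+n-1}]$, hence $\chi([\cdot,\cdot])\equiv 0$ on the odd part) is precisely a verification of that hypothesis. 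What each buys: the paper's dependence-relation trick is shorter and recycles machinery already present in Proposition \ref{th2}, while your version is self-contained and fills in exactly the two steps the paper leaves to the reader. One small inaccuracy: the blanket claim $[T(1),T(k)]=T(k+1)$ fails at $k=1$, since the degree-two component of $[T(1),T(1)]$ is $\C I_2+\C G_2$ and misses $L_2$; this is harmless because you only use the identity for $k=m_0\geq 3$ (and only with $L_1$, $L_2$), where it does hold.
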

\begin{proof}
In fact,
  there exists $m\gg0$ such that $L_mV=0$.
For $n\in\N$ and $m\gg0$, it is easy to get
$$0=[L_m,I_n]V=nI_{m+n}V\ \mathrm{and} \ 0=[L_m,G_{n}]V=nG_{m+n}V.$$
Then  for sufficiently large $m\in\Z_+$, we know that $V$ can be viewed as a module of quotient  algebra $T(1)/ T{(m)}$.
By  Lemma 1.33 of \cite{CW}, we immediately obtain $\mathrm{dim}(V)=1$.
\end{proof}
Let $\phi$ be a Lie superalgebra homomorphism $\phi:T(1)\rightarrow\C$.
  Then $\phi(L_i)=\phi(I_j)=\phi(G_{j})=0$ for $i\geq3,j\geq2$.
   Let $\mathfrak{t}_\phi=\C v$ be  a 1-dimensional vector space with
$$xv=\phi(x)v,\ I_0v=c_0v$$  for all $x\in T(1).$
Clearly, if $\phi(I_1)\neq0$,   $\mathfrak{t}_{\phi}$ is a simple $T(1)$-module and  $\mathrm{dim}(\mathfrak{t}_{\phi})=1$.
Now we consider the induced module
$$V_\phi=U(T{(0)})\otimes_{U(T(1))} \mathfrak{t}_\phi=\C[L_0]v\oplus G_0\C[L_0]v.$$
It is easy to see that $V_\phi$ is a simple $T{(0)}$-module if $\phi(I_1)\neq0$.
Then the simple induced $\S$-modules $\mathrm{Ind}(V_\phi)$ in Theorem \ref{th1}  are so-called Whittaker modules.
\subsection{High order Whittaker  modules}
Let $\phi_k$ be a Lie superalgebra homomorphism $\phi_k:T{(k)}\rightarrow\C$ for some $k\in\Z_+$.
  Then $\phi_k(L_i)=\phi_k(I_j)=\phi_k(G_{j})=0$ for $i\geq2k+1,j\geq2k$.
Assume that  $\mathfrak{t}_{\phi_k}=\C v$ is  a 1-dimensional vector space with
$$xv=\phi(x)v,\ I_0v=c_0v,$$  for all $x\in T(k).$
If $\phi(I_{{2k-1}})\neq0$,  $\mathfrak{t}_{\phi_k}$ is a simple $T(k)$-module and  $\mathrm{dim}(\mathfrak{t}_{\phi_k})=1$.
Consider the induced module
$$V_{\phi_k}=U(T(0))\otimes_{U(T(k))} \mathfrak{t}_\phi.$$
Clearly, $V_{\phi_k}$ is a simple $T(k)$-module if $\phi(I_{{2k-1}})\neq0$  and  $\mathrm{dim}(V_{\phi_k})=1$.
Then the corresponding simple  $\S$-modules $\mathrm{Ind}(V_{\phi_k})$ in Theorem \ref{th1}  are exactly the high order Whittaker modules.

\section*{Data availability}
The data that support the findings of this study are available from the corresponding author upon reasonable request.

\section*{Acknowledgements}
This work was supported by the National Natural Science Foundation of China
(Nos.11801369, 11871421, 12171129), the Overseas Visiting Scholars Program  of Shanghai Lixin University of  Accounting and Finance (No. 2021161),  the Zhejiang Provincial Natural Science Foundation of China (No.LY20A010022)  and the Scientific Research Foundation of Hangzhou Normal University (No.2019QDL012).
The authors would like to
thank the referee  for extensive suggestions to improve the paper.

\small 
\bigskip

Haibo Chen
\vspace{2pt}

  School of  Statistics and Mathematics, Shanghai Lixin University of  Accounting and Finance,   Shanghai
201209, China

\vspace{2pt}
Hypo1025@163.com

\bigskip

Xiansheng Dai
\vspace{2pt}

 School of Mathematical Sciences, Guizhou Normal  University, Guiyang 550001, China

\vspace{2pt}
daisheng158@126.com
\bigskip

Yanyong Hong
\vspace{2pt}

Department of Mathematics, Hangzhou Normal University,
Hangzhou 311121,  China

\vspace{2pt}
yyhong@hznu.edu.cn

\bigskip

\end{document}